\documentclass[12pt]{article}

\usepackage[a4paper,twoside,
 inner=0.72in, outer=0.72in,
 top=1in, bottom=1in
]{geometry}

\usepackage{xcolor}
\usepackage{hyperref}

\hypersetup{
 colorlinks=true,
 linkcolor=blue,
 citecolor=blue,
 urlcolor=blue
}
 
\usepackage{graphicx}  
\usepackage{amsthm, amsfonts, amsmath, amssymb, color, bm}

\numberwithin{equation}{section}
 
\newtheorem{theorem}{Theorem}[section]
\newtheorem{lemma}[theorem]{Lemma}
\newtheorem{corollary}[theorem]{Corollary}
\newtheorem{proposition}[theorem]{Proposition}
\newtheorem{definition}[theorem]{Definition}
\newtheorem{remark}[theorem]{Remark}
\newtheorem{example}[theorem]{Example}

\def\<{\langle}
\def\>{\rangle}

\title{Einstein connection \\ of nonsymmetric pseudo-Riemannian manifold, II}
\author{
Vladimir Rovenski\footnote{Department of Mathematics, Faculty of Natural Science, University of Haifa, 3498838 Haifa, Israel.\\
E-mail: vrovenski@univ.haifa.ac.il}
\and
Milan Zlatanovi\'c\footnote{Department of Mathematics, Faculty of Science and Mathematics, University of Ni\v{s}, 18000 Ni\v{s}, Serbia.\\
E-mail: zlatmilan@yahoo.com}
\and
Miroslav D. Maksimovi\'c\footnote{University of Pri\v{s}tina in Kosovska Mitrovica, Faculty of Sciences and Mathematics, Department of Ma\-thematics, 
Serbia;\ Institute of Mathematics and Informatics, Bulgarian Academy of Sciences, Sofia, Bulgaria.\\
E-mail: miroslav.maksimovic@pr.ac.rs}
}

\date{}

\begin{document}

\maketitle

\abstract{
Advances in modern physics since Einstein have made the nonsymmetric metric (0,2)-tensor $G=g+F$, where $g$ is a pseudo-Riemannian metric associated with gravity, and $F\ne0$ is a skew-symmetric tensor associated with electromagnetism,
more attractive than~ever. 
A.~Einstein considered a linear connection $\nabla$ with torsion $T$ 
such that $(\nabla_X\,G)(Y,Z)=G(T(Y,X),Z)$. 
In this paper, we explicitly present the Einstein connection 
of $G=g+F$ 
using a weak almost contact structure
$(f,\xi,\eta)$ with $g(X,fY)=F(X,Y)$ with a natural~condition (trivial in the almost contact case). 
We discuss special Einstein connections, and give an example in terms of the weighted product of almost Hermitian~manifold and a real line.

\vskip1mm
{\bf Keywords}: 
nonsymmetric pseudo-Riemannian manifold,
weak almost contact metric structure,
Einstein connection,
Q-T-condition.

\vskip1mm 
\textbf{MSC (2020)} 53B05; 53C15; 53C21}


\section{Introduction}
\label{sec:intro}

In~his attempt to construct a Unified Field Theory,
(Nonsymmetric Gravitational Theory -- NGT, see~\cite{Ein}), A.~Einstein \cite{Ein} considered a differentiable manifold $(M,G=g+F)$ equipped with a linear connection $\nabla$ with torsion $T(X,Y)=\nabla_XY-\nabla_YX-[X,Y]$ satisfying 
\begin{equation}\label{metein0}
 (\nabla_X G)(Y,Z)=-G(T(X,Y),Z)\quad (X,Y,Z\in\mathfrak{X}_M),
\end{equation} 
called here an {Einstein connection}.
The symmetric part $g$ of the (0,2)-tensor~$G$ is associated with gravity, and the skew-symmetric part $F$ is associated with electromagnetism.
Advances in modern physics since Einstein's time have made the asymmetric metric tensor more attractive than ever, see \cite{Ham-2019,JP-2007,Moffat-95}. 
Anticommuting variables, noncommutative geometry and superspace can be related to the antisymmetric part of~$G$. The NGT yields interesting results for the nonsymmetric energy momentum tensor and for dark energy and dark matter.

Recent approaches to modified gravity often rely on differen\-tial geometry, including torsion and non-metricity, as a natural extension of General Relativity. Connections with {totally skew-symmetric torsion} the 3-form $T(X,Y,Z):=g(T(X,Y),Z)$ 
are important due to the relations with mathematical physics (supersymmetric string theories, non-linear $\sigma$-models, and gravitational mo\-dels), as they admit a coupling to spinor fields and lead to holonomy and rigidity phenomena. 
S.~Ivanov and M.\,Lj.~Zlatanovi\'c (see~\cite{IZ1}) presented conditions for the existence and uniqueness of the Einstein connection with totally skew-symmetric torsion on a manifold $(M,G=g+F)$ and gave its explicit expression using an almost contact metric (a.c.m.) structure $(f,\xi,\eta)$ with $F(X,Y) = g(X,fY)$.

\smallskip

Weak metric structures \cite{rov-survey24} generalize the almost Hermitian and a.c.m. structures, as well as K.~Yano's $f$-structure, and are well suited for studying $G=g+F$ in NGT with an arbitrary skew-symmetric tensor~$F$.
V.~Rovenski and M.~Zlatanovi\'c~\cite{RZ-1,rst-68,rst-63} were the first to apply the weak almost Hermitian and weak a.c.m. structures to NGT of totally skew-symmetric torsion, and obtained the explicit form of the Einstein connection of a weak almost Hermitian manifold, which extend a result by M.~Prvanovi\'c~\cite{Prvanovic-95}.

\smallskip

In this paper, continuing our study \cite{rst-68}, we explicitly present the Einstein connection of an NGT space with degenerate $F$ modeled by a weak a.c.m. structure, satisfying the Q-T-condition \eqref{E-Q-T-condition}, which disappears for the  case of an a.c.m. structure.
We~also  explicitly present the special Einstein connection, defined by the additional condition 
\eqref{E-cond-2K}, 
of a weak a.c.m. manifold satisfying the Q-T-condition. 

The paper has three sections.
In~Section~\ref{sec:geom}, after the introductory  Section~\ref{sec:intro}, we review basics of NGT,
and discuss the relation between the torsion and contorsion of an Einstein connection. In~Section~\ref{sec:w.a.c.m} we explicitly present general and special Einstein connections of a.c.m. manifolds with the Q-T-condition (Theorems~\ref{T-w.c.m} and~\ref{T-s-w.c.m}).

\section{Einstein's nonsymmetric geometry}
\label{sec:geom}

A nonsymmetric pseudo-Riemannian manifold $(M,G=g+F)$ equipped with an Einstein connection
$\nabla$ different from the Levi-Civita connection $\nabla^g$ is called an NGT space.
The~basic (0,2)-tensor $G$ of an NGT space 
decomposes into two parts, the symmetric part $g$ (pseudo-Riemannian metric, $\det g\ne0$) and the skew-symmetric part $F\ne0$ (fundamental 2-form):
\begin{equation*}
 g(X,Y)=\tfrac12\big[G(X,Y)+G(Y,X)\big], \quad F(X,Y)=\tfrac12\big[G(X,Y)-G(Y,X)\big].
\end{equation*}
The skew-symmetric part, $F$, may have arbitrary (not necessarily constant) rank, in particular, may be non-degenerate (of maximal rank).
Thus, we obtain a (1,1)-tensor ${f}\ne0$,
\begin{equation}\label{E-def-f}
 g(X, fY) = F(X,Y)\quad (X, Y \in \mathfrak{X}_M).
\end{equation}
Since $F$ is skew-symmetric, the tensor ${f}$ is also skew-symmetric:
 $g({f}X, Y) = -g(X,{f}Y)$.
Note that $f=-A$, where the tensor $A$ is given in \cite{IZ1} by the equality $g(AX, Y) = F(X,Y)$.
The torsion of a linear connection $\nabla$ on $M$ is given by 
\[
 T(X,Y)=\nabla_XY-\nabla_YX-[X,Y].
\]
Separating symmetric and skew-symmetric parts of \eqref{metein0}, 
we express the covariant derivatives $\nabla g$ and $\nabla F$ in terms of the (0,3)-torsion tensor $T(X,Y,Z):=g(T(X,Y),Z)$:
\begin{align}\label{ein6}
 2\,(\nabla_X\,g)(Y,Z)
&= T(Z,X,\,Y+fY) -T(X,Y,\,Z+fZ) ,\\
\label{ein5}
 2\,(\nabla_Z\,F)(X,Y)
&= - T(Z,X,\,Y+fY) - T(Y,Z,\,X+fX) .
\end{align} 
Recall the co-boundary formula for a 2-form $F$ 
(without the coefficient 3, unlike \cite{Blair-survey}):
\begin{align}\label{E-3.3a}
 dF(X,Y,Z) &= X(F(Y,Z)) + Y(F(Z,X)) + Z(F(X,Y)) \notag\\
 &\ -F([X,Y],Z) - F([Z,X],Y) - F([Y,Z],X).
\end{align}
The equality \eqref{E-3.3a} yields
\begin{align}\label{E-3.3}
 dF(X,Y,Z) =
 (\nabla^g_X\,F)(Y,Z)+(\nabla^g_Y\,F)(Z,X)+(\nabla^g_Z\,F)(X,Y).
\end{align}
The Einstein's metricity condition \eqref{metein0} can be written  in the form
\begin{equation}\label{metricityeqq}
 (\nabla_X(g+F))(Y,Z) = -\,T(X,Y,Z)-T(X,Y,fZ).
\end{equation} 
Taking the cyclic sum in \eqref{metricityeqq} and applying the equality
\begin{equation*}
\begin{aligned}
dF(X,Y,Z) &= \,T(X,Y,fZ) +\,T(Y,Z,fX) +\,T(Z,X,fY)\\
&+(\nabla_X F)(Y,Z) +(\nabla_Y F)(Z,X) +(\nabla_Z F)(X,Y) ,
\end{aligned} 
\end{equation*} 
which follows from \eqref{ein5} and \eqref{E-3.3}, we obtain
\begin{equation}\label{ein2
3.2}
\begin{aligned}
&\quad (\nabla_X\,g)(Y,Z) +(\nabla_Y\,g)(Z,X) +(\nabla_Z\,g)(X,Y) \\
&= -\,dF(X,Y,Z) - T(X,Y,Z) - T(Y,Z,X) - T(Z,X,Y).
\end{aligned}
\end{equation} Since the left hand side of (\ref{ein2
3.2}) is symmetric, while the right hand side is
skew-symmetric, we get the following two relations, see~\cite[Eq.~(3.3)]{IZ1}:
\begin{align*}
&(\nabla_X\,g)(Y,Z) +(\nabla_Y\,g)(Z,X) +(\nabla_Z\,g)(X,Y) = 0, 
\end{align*}
and
\begin{align}\label{ein8}
 dF(X,Y,Z) = -T(X,Y,Z) -T(Y,Z,X) -T(Z,X,Y).
\end{align} 
The Einstein connection $\nabla$
is represented in \cite[Eq.~(3.7)]{IZ1} using the torsion $T$ as
\begin{align}\label{genconein}
 g(\nabla_XY,Z) &=g(\nabla^g_XY,Z) 
 +\tfrac12\big\{T(X,Y,Z) -T(Z,X,{f}Y) +T(Y,Z,{f}X)\big\} ,
\end{align}
where $\nabla^g$ is the Levi-Civita connection.
The {\it difference tensor} $K$ of a linear connection $\nabla$ and 
$\nabla^g$ is
 $K_X Y := \nabla_X Y - \nabla^g_X Y$.
By \eqref{genconein}, the (0,3)-tensor $K(X,Y,Z)=g(K_X Y, Z)$ and the torsion tensor of an Einstein connection $\nabla$ are expressed linearly in terms of each~other:
\begin{align}\label{E-tordfnew}
 2\,K(X,Y,Z)&= T(X,Y,Z) -T(Z,X,{f}Y)+T(Y,Z,{f}X),\\
\label{E-tordfnew2}
 T(X,Y,Z) &= K(X,Y,Z) - K(Y,X,Z) \quad\Longleftrightarrow\quad 
 T(X,Y) = K_X Y - K_Y X .
\end{align}
Since $\nabla^g\,g=0$, from \eqref{E-def-f},
using various presentations of tensors,
we obtain the following:
\begin{align}\label{E-nabla-F-f}
 (\nabla^g F)(Z,X,Y)= (\nabla^g_{Z}\,F)(X,Y)
 = g(X, (\nabla^g_Z\,f)Y).    
\end{align}
When $\nabla$ preserves the metric tensor: $\nabla g=0$,
they call $K$ the \textit{contorsion tensor}; 
in this case the tensors $K$ and $T$ are related by
\begin{align}\label{E-tordfnew3}
 2\,K(Y,Z,X) = T(X,Y,Z) + T(Y,Z,X) - T(Z,X,Y).
\end{align}
Comparing \eqref{E-tordfnew3} and \eqref{E-tordfnew} yields the
following identity for the torsion of an Einstein connection
satisfying $\nabla g=0$:
\begin{align*}
 T(Z,X,Y) - T(Y,Z,X) = T(Y,Z,{f}X) -T(Z,X,{f}Y) .
\end{align*}

\begin{lemma}[\cite{rst-68}]\label{L-nabla-g}
For an Einstein connection $\nabla$, the following conditions are equivalent: 

(i) $K(X,Y,Z)=-K(X,Z,Y)$, 

(ii) $\nabla g=0$,

(iii) $(\nabla_X F)(Y,Z)=-(\nabla_Y F)(X,Z)$,
or $g((\nabla_X f)Z, Y) = -g((\nabla_Y f)Z, X)$.
\end{lemma}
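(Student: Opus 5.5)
The equivalence (i)$\Leftrightarrow$(ii) is general and does not use the Einstein condition. Write $\nabla=\nabla^g+K$ with $K(X,Y,Z)=g(K_XY,Z)$. Since $\nabla^g g=0$,
\[
(\nabla_X g)(Y,Z)=-g(K_XY,Z)-g(Y,K_XZ)=-K(X,Y,Z)-K(X,Z,Y).
\]
So $\nabla g=0$ holds exactly when $K$ is skew-symmetric in its last two arguments.

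For (ii)$\Leftrightarrow$(iii) I would use the Einstein condition in the form \eqref{metricityeqq}. Its symmetric part in $(Y,Z)$ gives $\nabla g$, and its skew part gives $\nabla F$. Put
\[
B(X,Y,Z):=T(X,Y,Z)+T(X,Y,fZ),
\]
so that $(\nabla_X g)(Y,Z)+(\nabla_X F)(Y,Z)=-B(X,Y,Z)$. Since $T(X,Y,\cdot)$ is skew in $X,Y$, so is $B$.

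Assume (ii). Then $(\nabla_X F)(Y,Z)=-B(X,Y,Z)$, and skew-symmetry of $B$ in $X,Y$ gives $(\nabla_X F)(Y,Z)=-(\nabla_Y F)(X,Z)$, which is (iii).

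Conversely, assume (iii), so $\nabla F(X,Y,Z):=(\nabla_X F)(Y,Z)$ is skew in the first two slots. It is also skew in the last two, so it is totally skew-symmetric. Set $S(X,Y,Z):=(\nabla_X g)(Y,Z)$, which is symmetric in $Y,Z$. Then
\[
S(X,Y,Z)=-B(X,Y,Z)-\nabla F(X,Y,Z)
\]
is skew in $X,Y$, since both terms on the right are. A tensor symmetric in slots $2,3$ and skew in slots $1,2$ vanishes, by the standard six-step permutation argument:
\[
S(X,Y,Z)=-S(Y,X,Z)=-S(Y,Z,X)=S(Z,Y,X)=S(Z,X,Y)=-S(X,Z,Y)=-S(X,Y,Z).
\]
Hence $S=0$, which is (ii).

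Finally, the reformulation of (iii) via $f$ follows from $F(Y,Z)=g(Y,fZ)$. Once $\nabla g=0$ we have $(\nabla_X F)(Y,Z)=g(Y,(\nabla_X f)Z)$, so (iii) reads $g((\nabla_X f)Z,Y)=-g((\nabla_Y f)Z,X)$.

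The main delicate point is the direction (iii)$\Rightarrow$(ii). Here I must not use \eqref{E-nabla-F-f}, which holds for $\nabla^g$, or the $f$-form of (iii), since that presupposes $\nabla g=0$ or at least requires care. I would therefore argue purely with $F$ and $B$ as above, and state the $f$-version only under (ii), where it is equivalent.
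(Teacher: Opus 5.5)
Your proof is correct. The paper gives no argument for this lemma; it is quoted from \cite{rst-68}, so there is no in-text proof to compare with. Your route is the natural one: the general identity $(\nabla_X g)(Y,Z)=-K(X,Y,Z)-K(X,Z,Y)$ gives (i)$\Leftrightarrow$(ii), and the skew-symmetry in $X,Y$ of the right-hand side of \eqref{metricityeqq} gives (ii)$\Leftrightarrow$(iii). This is the same symmetric/skew splitting the paper uses to obtain \eqref{ein6}--\eqref{ein5}. A small simplification: in (iii)$\Rightarrow$(ii) you do not need total skew-symmetry of $\nabla F$. Skewness in the first two slots, which is (iii) itself, already makes $S$ skew in $X,Y$.

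The one thing left open is the second form of (iii). You only show that it rewrites the first form when $\nabla g=0$. So if it is meant as a stand-alone equivalent condition, the implication from it back to (ii) is not covered.

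It closes in one line when $I+f$ is invertible, i.e.\ when $G(X,Z)=g(X,(I+f)Z)$ is non-degenerate. This is automatic for Riemannian $g$, since $g((I+f)Z,Z)=g(Z,Z)$. Inserting $(\nabla_X F)(Y,Z)=(\nabla_X g)(Y,fZ)+g(Y,(\nabla_X f)Z)$ into \eqref{metricityeqq} gives
\[
(\nabla_X g)(Y,(I+f)Z)=-T(X,Y,(I+f)Z)-g(Y,(\nabla_X f)Z).
\]
Under the $f$-form, the right-hand side is skew in $X,Y$. Hence $S(X,Y,W)$ is skew in $X,Y$ for every $W$, and your permutation argument gives $S=0$.
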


\begin{remark}\rm
For an Einstein connection on $(M,G=g+F)$, the tensor $K(X,Y,Z)$~is not totally skew-symmetric, and it is interesting to study its particular symmetries. 
The~skew-symmetry of $K_X$ i.e., 
$K(X,Y,Z)=-K(X,Z,Y)$, see Lemma~\ref{L-nabla-g}(i),
by \eqref{E-tordfnew}, reduces~to
\begin{align}\label{E-cond-E2}
 T(X,Y,Z) -T(Z,X,Y) - T(Z,X,fY)+T(X,Y,fZ)=0 .
\end{align}
Using \eqref{E-cond-E2} in \eqref{ein5}, we obtain 
 $(\nabla_X F)(Y,Z)= -T(X,Y,Z) -T(X,Y,fZ)$.
\end{remark}

The following property $K_XY=-K_YX$ of the difference tensor $K$ 
(i.e., the skew--symmetry of \(K(X,Y,Z)\) only in the first two arguments) by \eqref{E-tordfnew} reduces~to
\begin{align}\label{E-cond-2K}
 T(Y,Z,fX) + T(X,Z,fY) = 0,
\end{align}
and characterizes \emph{special Einstein connections},
i.e., the symmetric part of an Einstein connection $\nabla$ coincides with the Levi-Civita connection $\nabla^g$, see~\cite{Prvanovic-95}.
In this case, \eqref{E-tordfnew2} yields $K_XY=\tfrac12\,T(X,Y)$, i.e. the special Einstein connection has the form $\nabla=\nabla^g +\tfrac12\,T$.
The~condition \eqref{E-cond-2K} doesn't imply the total skew--symmetry of \(K(X,Y,Z)\), which, in addition, requires the condition \eqref{E-cond-E2}. 

\begin{proposition}[see
\cite{rst-68}]
Let the condition \eqref{E-cond-E2} be true. 
Then we get the equality 
\begin{align*}
 K(X,Y,Z) = T(Z,Y,X) - \tfrac12\,dF(X,Y,Z);  
\end{align*}
therefore, the tensor $K$ is totally skew-symmetric if and only if the torsion $T$ is totally skew-symmetric. Moreover, the total skew-symmetry of $T$ implies the 
condition 
\begin{align*}
 T(fX, Y) = T(X,fY) = -f\,T(X,Y).
\end{align*}
\end{proposition}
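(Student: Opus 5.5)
The plan is to reduce everything to the metric case, where the contorsion formula \eqref{E-tordfnew3} is available, and then use the cyclic identity \eqref{ein8} to bring in $dF$.

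\emph{Step 1: the formula for $K$.} As noted in the Remark, condition \eqref{E-cond-E2} is just \eqref{E-tordfnew} rewritten under the skew-symmetry $K(X,Y,Z)=-K(X,Z,Y)$. By Lemma~\ref{L-nabla-g} (i)$\Leftrightarrow$(ii), this gives $\nabla g=0$. Hence $K$ is a genuine contorsion tensor and \eqref{E-tordfnew3} holds. Relabelling the arguments in \eqref{E-tordfnew3} gives
\[
2K(X,Y,Z)=T(Z,X,Y)+T(X,Y,Z)-T(Y,Z,X).
\]
Next I use \eqref{ein8} in the form $T(X,Y,Z)+T(Z,X,Y)=-dF(X,Y,Z)-T(Y,Z,X)$. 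This yields $2K(X,Y,Z)=-dF(X,Y,Z)-2T(Y,Z,X)$. Since $T$ is skew in its first two arguments, $-T(Y,Z,X)=T(Z,Y,X)$, and this is exactly the claimed formula.

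\emph{Step 2: total skew-symmetry.} $dF$ is a 3-form, hence totally skew-symmetric. By Step 1, $K$ is totally skew-symmetric iff the tensor $(X,Y,Z)\mapsto T(Z,Y,X)$ is. Reversing the order of arguments preserves total skew-symmetry, so this holds iff $T$ is totally skew-symmetric.

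\emph{Step 3: the $f$-compatibility.} Assume $T$ is totally skew-symmetric. Then it is invariant under cyclic permutations, so $T(X,Y,Z)=T(Z,X,Y)$ and \eqref{E-cond-E2} reduces to $T(X,Y,fZ)=T(Z,X,fY)$.

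The left side equals $g(T(X,Y),fZ)=-g(fT(X,Y),Z)$, using the skew-symmetry of $f$. By total skew-symmetry, the right side equals $T(X,fY,Z)=g(T(X,fY),Z)$. Since $g$ is nondegenerate, $T(X,fY)=-fT(X,Y)$.

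Finally, $T(fX,Y)=-T(Y,fX)=fT(Y,X)=-fT(X,Y)$.

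The only delicate point is the index bookkeeping in Step 1, namely the correct relabelling of \eqref{E-tordfnew3} before substituting \eqref{ein8}. The remaining steps are short consequences of it.
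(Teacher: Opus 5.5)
Your proof is correct: \eqref{E-cond-E2} is exactly the skew-symmetry $K(X,Y,Z)=-K(X,Z,Y)$, so the contorsion relation \eqref{E-tordfnew3} applies, and eliminating $T(X,Y,Z)+T(Z,X,Y)$ via \eqref{ein8} gives the formula for $K$; Step 3 correctly reduces \eqref{E-cond-E2}, under total skew-symmetry, to $T(X,fY)=-f\,T(X,Y)$. The paper quotes this proposition from earlier work without reproducing a proof, and your argument is the natural one built from the identities it sets up just before (Lemma~\ref{L-nabla-g}, \eqref{E-tordfnew3}, \eqref{ein8}).
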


\begin{proposition}[see 
\cite{Prvanovic-95}]
For an Einstein connection $\nabla$ with torsion $T$, we have 
\begin{align}\label{2.7^*}
\notag
 2(\nabla^g_X F)(Y,Z)= & -T(Z,X,Y) -T(X,Y,Z) -T(fZ,X,fY)  \\
 & -T(X,fY,fZ) +T(Y,fZ,fX) +T(fY,Z,fX). 
\end{align} 
\end{proposition}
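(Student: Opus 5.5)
The plan is to convert $\nabla^g F$ into $\nabla F$ plus correction terms built from the difference tensor $K=\nabla-\nabla^g$. Then I substitute the known expressions: \eqref{ein5} for $\nabla F$, and \eqref{E-tordfnew} for $K$ in terms of $T$. Everything should reduce to an algebraic identity in $T$ that uses only the skew-symmetry of $T$ in its first two arguments.

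\emph{Step 1.} By the Leibniz rule, $(\nabla_X F)(Y,Z)=(\nabla^g_X F)(Y,Z)-F(K_XY,Z)-F(Y,K_XZ)$. From \eqref{E-def-f} and the skew-symmetry of $F$ we have $F(U,Z)=g(U,fZ)$. Hence $F(K_XY,Z)=K(X,Y,fZ)$ and $F(Y,K_XZ)=-K(X,Z,fY)$, so
\[
(\nabla^g_X F)(Y,Z)=(\nabla_X F)(Y,Z)+K(X,Y,fZ)-K(X,Z,fY).
\]

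\emph{Step 2.} I multiply this by $2$ and substitute each term:
\begin{itemize}
\item from \eqref{ein5}, with the arguments relabeled so the derivative is along $X$: $2(\nabla_XF)(Y,Z)=-T(X,Y,Z)-T(X,Y,fZ)-T(Z,X,Y)-T(Z,X,fY)$;
\item from \eqref{E-tordfnew}: $2K(X,Y,fZ)=T(X,Y,fZ)-T(fZ,X,fY)+T(Y,fZ,fX)$;
\item again from \eqref{E-tordfnew}: $2K(X,Z,fY)=T(X,Z,fY)-T(fY,X,fZ)+T(Z,fY,fX)$.
\end{itemize}

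\emph{Step 3.} This is the bookkeeping step, and the only real risk is sign errors, so I track the terms carefully.
\begin{itemize}
\item The two terms $\pm T(X,Y,fZ)$ cancel.
\item $-T(Z,X,fY)-T(X,Z,fY)=0$ by skew-symmetry of $T$ in its first two slots.
\item What remains is $-T(Z,X,Y)-T(X,Y,Z)-T(fZ,X,fY)+T(Y,fZ,fX)+T(fY,X,fZ)-T(Z,fY,fX)$.
\end{itemize}
Using skew-symmetry once more, $T(fY,X,fZ)=-T(X,fY,fZ)$ and $-T(Z,fY,fX)=T(fY,Z,fX)$. This gives exactly the six terms on the right-hand side of \eqref{2.7^*}.

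The main care point is getting the relabeling of \eqref{ein5} and the sign of the $F(Y,K_XZ)$ term right. No further structure (such as the weak a.c.m. conditions) is needed.
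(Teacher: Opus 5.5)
Your derivation is correct. The Leibniz identity $(\nabla^g_XF)(Y,Z)=(\nabla_XF)(Y,Z)+K(X,Y,fZ)-K(X,Z,fY)$, the relabelled form of \eqref{ein5}, and the two instances of \eqref{E-tordfnew} all check out. After the cancellations you list, the six surviving terms coincide with the stated right-hand side.

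The paper gives no proof of its own here; it only quotes the identity from \cite{Prvanovic-95}, a source working with almost Hermitian structures. Your argument is therefore a self-contained proof where the paper offers only a citation.

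Its main payoff is that it makes the scope of the identity explicit. You use only three ingredients:
\begin{itemize}
\item the Einstein condition, entering through \eqref{ein5} and \eqref{E-tordfnew};
\item the definition $F(X,Y)=g(X,fY)$;
\item the skew-symmetry $T(X,Y,\cdot)=-T(Y,X,\cdot)$.
\end{itemize}
No relation such as $f^2=-I$ and no nondegeneracy of $F$ is needed. This is exactly what justifies the paper's later use of the identity for weak a.c.m.\ structures, where $f$ has rank $2n$ and $f^2\neq -I$.

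If you also want independence from \cite{IZ1}, \eqref{E-tordfnew} can be checked directly. Verify that it satisfies $K(X,Y,Z)-K(Y,X,Z)=T(X,Y,Z)$ and $K(X,Y,Z)+K(X,Z,Y)=-(\nabla_Xg)(Y,Z)$, with $\nabla g$ given by \eqref{ein6}. These two conditions determine $K$ uniquely.
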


In \cite{rst-68}, we explicitly presented the Einstein connection of a nonsymmetric pseudo-Riemannian space $(M,G=g+F)$ with non-degenerate $F$, in particular, of a weak almost Hermitian manifold.
Recall that a pseudo-Riemannian manifold $(M^{2n},g)$ endowed with a skew-symmetric (1,1)-tensor $f$ of maxi\-mal rank 
($-f^2$ is not necessarily the identity endomorphism ${I}$ of $TM$) is called a \emph{weak almost Hermitian manifold} \cite{rov-survey24}.
The $f^2$-\textit{torsion~condition} 
\begin{align}\label{E-Q-torsion}
 T(f^2X, Y) = T(X,f^2Y) = f^2 T(X,Y) ,
\end{align} 
can be equivalently written as
 $T(f^2X, Y,Z) = T(X,f^2Y,Z) = T(X,Y,f^2Z)$.
In this case, we introduce a new (1,1)-tensor $\widetilde Q=-I-f^2$, and the $f^2$-\textit{torsion condition} \eqref{E-Q-torsion} 
reads
\begin{align}\label{E-tildeQ-torsion}
 T(\widetilde QX, Y,Z) = T(X,\widetilde QY,Z) = T(X,Y,\widetilde QZ) .
\end{align} 

The following theorem generalizes  \cite[Theorem 1]{Prvanovic-95},
and for a non-degenerate tensor $P={I}-f^2$ we can use it to completely determine~$T$, and hence, an Einstein connection $\nabla$.

\begin{theorem}
Let $(f,g)$ be a weak almost Hermitian structure on
a nonsymmetric pseudo-Riemannian manifold $(M^{2n},G=g+F)$, where $F(X,Y)=g(X,fY)$.
Suppose that an Einstein connection $\nabla$ on $M$ satisfies the $f^2$-torsion condition \eqref{E-Q-torsion},
and the tensor $P:={I}-f^2$ has rank $2n$. 
Then the torsion $T$ of $\nabla$ is given~by
\begin{align}\label{Eq-QT-solution}
\notag
 & 2\,T(Y,Z,\,(I+\tfrac12\,\widetilde Q)^2f^4 X) = 
 (\nabla^g_X F)((f+f^3)Y,fZ) 
 +(\nabla^g_Y F)(f^2Z,X) 
 +(\nabla^g_Z F)(f^2X,Y) \\
\notag
&\ 
-(\nabla^g_{fX} F)(f^3Y,Z) 
-(\nabla^g_{fX} F)(f^2Y,fZ) 
+(\nabla^g_{fY} F)(f^3Z,X)
+(\nabla^g_{fZ} F)(f^2X,fY) \\
\notag 
&\
+(\nabla^g F)(P^{-1}(2\,\widetilde Q+\widetilde Q^2)f^2X,fY,Z)
-(\nabla^g F)(P^{-1}(2\,\widetilde Q+\widetilde Q^2)f^2X,Y,fZ) \\
&\ -(\nabla^g F)(\widetilde Qf^2X,Y,Z) 
{-}\tfrac12\,dF((3\,\widetilde Q{+}2\,\widetilde Q^2)X,fY,fZ) 
+\tfrac12\,dF((3\,\widetilde Q{+}\widetilde Q^2) f^2X,Y,Z) .
\end{align}
\end{theorem}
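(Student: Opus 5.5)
The plan is to start from Prvanovi\'c's identity \eqref{2.7^*}, which expresses $2(\nabla^g_X F)(Y,Z)$ linearly through the torsion. Together with \eqref{ein8}, which gives the cyclic sum of $T$ as $-dF$, this is a linear system for the unknown 3-tensor $T$, with data $\nabla^g F$ and $dF$. Our aim is to invert this system. The $f^2$-torsion condition \eqref{E-Q-torsion}, in the equivalent form \eqref{E-tildeQ-torsion}, lets us move $f^2$ (equivalently $\widetilde Q$) freely among the three slots of $T$. For a single $f$ we use the skew-symmetry $g(fU,V)=-g(U,fV)$ only in the third slot, where $T(X,Y,fZ)=-g(fT(X,Y),Z)$.

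\textbf{Step 1.} Write \eqref{2.7^*} for the triples $(X,Y,Z)$, $(fX,f^2Y,Z)$, $(fX,fY,fZ)$, $(Y,f^2Z,X)$, $(Z,f^2X,Y)$, and so on, following exactly the arguments that appear on the right-hand side of \eqref{Eq-QT-solution}. Each instance yields a combination of terms $T(\cdot,\cdot,\cdot)$ with various powers of $f$ inserted. Using \eqref{E-Q-torsion}, normalize every term so that all $f$-insertions are moved to the third slot. The result should be, schematically, expressions of the form $T(Y,Z,p(f)X)$, $T(Z,X,q(f)Y)$ and $T(X,Y,r(f)Z)$ with polynomials $p,q,r$ in $f$. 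Odd powers of $f$ in the first two slots cannot be moved this way, and these are the terms we must cancel against each other.

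\textbf{Step 2.} Take the signed combination indicated in \eqref{Eq-QT-solution}. It is designed so that the terms with the ``wrong'' cyclic position, $T(Z,X,\cdot)$ and $T(X,Y,\cdot)$, cancel modulo cyclic sums. Any leftover cyclic sums $T(X,Y,W)+T(Y,Z,W')+\dots$ are converted into $dF$ terms via \eqref{ein8}, applied with $f^2$ or $\widetilde Q$ inserted. This is legitimate because \eqref{E-tildeQ-torsion} makes $\widetilde Q$ commute through the cyclic sum, so that $T(\widetilde Q X,Y,Z)+\text{cyclic}=-dF(\widetilde QX,Y,Z)$. This step produces the terms $dF((3\widetilde Q+2\widetilde Q^2)X,fY,fZ)$ and $dF((3\widetilde Q+\widetilde Q^2)f^2X,Y,Z)$.

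\textbf{Step 3.} After the cancellation, the single surviving structure is $T(Y,Z,\Phi X)$ for some polynomial $\Phi$ in $f$. Substituting $f^2=-I-\widetilde Q$ simplifies the coefficient to $2(I+\tfrac12\widetilde Q)^2f^4$. However, some residual terms still carry the factor $(2\widetilde Q+\widetilde Q^2)$ on $T$ that cannot be absorbed. For these we use that $P=I-f^2=2I+\widetilde Q$ is invertible and commutes with $f$. Writing such a residual as $T(Y,Z,P\,P^{-1}(\dots)X)$ and re-expanding $P$ through \eqref{2.7^*} once more gives the two terms $(\nabla^gF)(P^{-1}(2\widetilde Q+\widetilde Q^2)f^2X,\cdot,\cdot)$.

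The main obstacle is the bookkeeping in Steps 2--3: choosing the exact combination so that all non-target torsion terms cancel. The first thing we would try is to first solve the system in the almost Hermitian case $\widetilde Q=0$, which reproduces Prvanovi\'c's formula. We would then track the $\widetilde Q$-corrections order by order, using $f^2=-I-\widetilde Q$ and the commutation of $\widetilde Q$ with $f$. Once the left-hand side is established, $T$ is determined whenever $(I+\tfrac12\widetilde Q)^2f^4$ is invertible.
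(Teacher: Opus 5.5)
Your Steps~2--3 are where the proof should happen, and they are not carried out. You take ``the signed combination indicated in \eqref{Eq-QT-solution}'' and assert three things: that it cancels all non-target torsion terms, that the leftover cyclic sums give exactly the two $dF$-terms, and that ``re-expanding $P$ through \eqref{2.7^*}'' gives the two $P^{-1}$-terms. You exhibit no instances of \eqref{2.7^*} with coefficients and no identity that generates $P^{-1}$, so the argument presupposes the formula. (The paper gives no derivation here either: the theorem is quoted from Part~I. Its analogous identities \eqref{E-fy-fz2-xi}--\eqref{E-fy-fz3-xi} are obtained by the same substitute-and-subtract method you propose.)

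The gap cannot be filled by bookkeeping, because \eqref{Eq-QT-solution} does not follow from \eqref{2.7^*}, \eqref{ein8} and \eqref{E-Q-torsion}. Take a strict nearly K\"ahler $(M^6,J,g)$, e.g.\ $S^6$, and $f=\sqrt{\mu}\,J$ with $\mu>0$ constant. Then:
\begin{itemize}
\item $f^2=-\mu I$, so \eqref{E-Q-torsion} holds for every $T$;
\item $\widetilde Q=(\mu-1)I$, and $P=(1+\mu)I$ is invertible;
\item $D:=\nabla^gF$ is a nonzero $3$-form with $D(JU,V,W)=D(U,JV,W)$;
\item $dF=3D$ by \eqref{E-3.3}.
\end{itemize}
For $T:=-D$, each of $T(fZ,X,fY)$, $T(X,fY,fZ)$, $T(Y,fZ,fX)$, $T(fY,Z,fX)$ equals $-\mu\,T(X,Y,Z)$. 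They enter \eqref{2.7^*} with signs $-,-,+,+$, so they cancel, and \eqref{2.7^*} reduces to $2D=-2T$. Hence $T=-\nabla^gF$ is the torsion of an Einstein connection satisfying every hypothesis. Substituting it into \eqref{Eq-QT-solution}, the two $P^{-1}$-terms cancel. The left side is $-\tfrac12\mu^2(\mu+1)^2D(X,Y,Z)$ and the right side is $\tfrac12\mu(\mu+1)(3\mu-5)D(X,Y,Z)$. These agree only for $\mu=1$; for $\mu=2$ you get $-18\,D$ versus $3\,D$.

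Your closing claim, that $T$ is determined once $(I+\tfrac12\widetilde Q)^2f^4$ is invertible, also fails. For any $T$ with $T(JU,V,W)=T(U,JV,W)=T(U,V,JW)$, the right side of \eqref{2.7^*} in this setting equals
\[
(\mu-1)\,\sigma_T(X,Y,Z)-(3\mu-1)\,T(Y,Z,X),\qquad \sigma_T(X,Y,Z):=T(X,Y,Z)+T(Y,Z,X)+T(Z,X,Y).
\]
For $\mu=\tfrac13$, every nonzero such $T$ with $\sigma_T=0$ solves the homogeneous system, and such tensors exist. So the torsion is not unique, although $P$ and $(I+\tfrac12\widetilde Q)^2f^4$ are invertible. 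For $3\mu\neq1$, when $T$ and $\nabla^gF$ both have this symmetry, \eqref{ein8} gives
\[
T(Y,Z,X)=-\bigl(2(\nabla^g_XF)(Y,Z)+(\mu-1)\,dF(X,Y,Z)\bigr)/(3\mu-1).
\]
Under the same symmetry, \eqref{Eq-QT-solution} instead yields
\[
T(Y,Z,X)=\bigl((\mu-1)\,dF(X,Y,Z)-2(\nabla^g_XF)(Y,Z)\bigr)/\bigl(\mu(\mu+1)\bigr).
\]

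The missing ingredient is a genuine inversion of the linear operator in \eqref{2.7^*} on tensors satisfying \eqref{E-Q-torsion}. Its non-degeneracy condition (already $3\mu\neq1$ here) is not captured by the rank of $P$. Carried out honestly, your strategy would end in a different formula under a different hypothesis, not in \eqref{Eq-QT-solution}.
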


\begin{corollary}[Corollary 4.6 in \cite{rst-68}
and Theorems 1 and 2 of \cite{Prvanovic-95}]
Let $(J,g)$ be an almost Hermitian structure on
a nonsymmetric Riemannian manifold $(M^{2n},G=g+F)$, where $F(X,Y)=g(X,JY)$.
Then the torsion of an Einstein connection on $M$ is given by
\begin{align}\label{Eq-2.13}
\notag
 2\,T(Y,Z,X) =&\ 2\,(\nabla^g_{JX} F)(JY,Z)
  - (\nabla^g_{JY} F)(JZ,X) -(\nabla^g_{JZ} F)(X,JY)  \\
 & -(\nabla^g_Y F)(Z,X) -(\nabla^g_Z F)(X,Y) ;
\end{align}
in particular, the torsion of a special Einstein connection $\nabla$ on $M$, see \eqref{E-cond-2K}, is given~by
\begin{equation*}
 2\,T(X,Y,Z) = (\nabla^g_XF)(Y,Z)
 -(\nabla^g_{JZ}F)(JX,Y) -(\nabla^g_{JY}F)(JX,Z) .
\end{equation*}
\end{corollary}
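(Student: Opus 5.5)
The corollary is the almost Hermitian specialization of the preceding theorem. For $f=J$ with $J^2=-I$ we have $\widetilde Q=-I-J^2=0$ and $P=I-J^2=2I$, which has rank $2n$. The $f^2$-torsion condition \eqref{E-Q-torsion} then reads $T(-X,Y)=T(X,-Y)=-T(X,Y)$, which holds trivially, so the theorem applies to every Einstein connection.

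\textbf{Substitution.} Put $\widetilde Q=0$ in \eqref{Eq-QT-solution}.
\begin{itemize}
\item On the left, $(I+\tfrac12\widetilde Q)^2f^4=J^4=I$, so the left side is $2\,T(Y,Z,X)$.
\item On the right, every term containing $\widetilde Q$ vanishes: the two $P^{-1}(2\widetilde Q+\widetilde Q^2)$ terms, the $\widetilde Qf^2X$ term, and both $dF$ terms.
\item In the remaining seven terms, use $J^2=-I$ and $J^3=-J$. Then $f+f^3=0$, so the first term vanishes.
\end{itemize}
What is left is
\[
-(\nabla^g_YF)(Z,X)-(\nabla^g_ZF)(X,Y)+(\nabla^g_{JX}F)(JY,Z)+(\nabla^g_{JX}F)(Y,JZ)-(\nabla^g_{JY}F)(JZ,X)-(\nabla^g_{JZ}F)(X,JY).
\]
To reach \eqref{Eq-2.13}, I need $(\nabla^g_{JX}F)(Y,JZ)=(\nabla^g_{JX}F)(JY,Z)$. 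This is the standard almost Hermitian identity. Since $J^2=-I$, differentiating gives $(\nabla^g_W J)J=-J(\nabla^g_W J)$. Combining this with skew-symmetry of $J$ and \eqref{E-nabla-F-f}, we get $g(Y,(\nabla^g_WJ)JZ)=-g(Y,J(\nabla^g_WJ)Z)=g(JY,(\nabla^g_WJ)Z)$. This yields \eqref{Eq-2.13}.

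\textbf{Special Einstein connection.} Here I add \eqref{E-cond-2K}, namely $T(Y,Z,JX)=-T(X,Z,JY)$.
\begin{itemize}
\item Write $2T(X,Y,Z)$ using \eqref{Eq-2.13} with the arguments permuted, i.e. with $(Y,Z,X)\mapsto(X,Y,Z)$.
\item Alternatively, return to \eqref{2.7^*}, or to \eqref{ein5} with $\nabla=\nabla^g+\tfrac12T$, so that $K_XY=\tfrac12T(X,Y)$. This gives $(\nabla^g_XF)(Y,Z)=(\nabla_XF)(Y,Z)+\tfrac12[F(T(X,Y),Z)+F(Y,T(X,Z))]$, i.e. $\nabla^gF$ in terms of $T$ and $T(\cdot,\cdot,J\cdot)$.
\item Then use \eqref{E-cond-2K} to move $J$ between slots, and substitute $JX$ and $JY$ for the arguments.
\end{itemize}
The expected outcome is that the $(\nabla^g_ZF)$ and $(\nabla^g_YF)$ terms combine into the two $J$-twisted terms $-(\nabla^g_{JZ}F)(JX,Y)-(\nabla^g_{JY}F)(JX,Z)$.

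\textbf{Main obstacle.} The main obstacle is this last reorganization. I would first express $\nabla^g F$ via \eqref{2.7^*} and impose \eqref{E-cond-2K}, which should collapse \eqref{2.7^*} to a form involving only $T$ and $T(J\cdot,\cdot,J\cdot)$. Then I would evaluate the claimed right-hand side with this expression and check that it equals $2T(X,Y,Z)$, using $J^2=-I$ and \eqref{E-cond-2K} repeatedly.
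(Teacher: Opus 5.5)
Your derivation of \eqref{Eq-2.13} is correct. It is the same reduction the paper carries out for $Q=I$ in part (iii) of the remark after Example~\ref{ex-3.8}: $\widetilde Q=0$, $J^4=I$, $J+J^3=0$, and $(\nabla^g_WF)(Y,JZ)=(\nabla^g_WF)(JY,Z)$.

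The special-connection part has a genuine gap. You only state the ``expected outcome'' and never do the final check, and that check fails for the formula as printed.

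Your own route settles it quickly. Write \eqref{E-cond-2K} as $T(A,B,JC)=-T(C,B,JA)$. Put $K=\tfrac12T$ in \eqref{ein5}, or impose \eqref{E-cond-2K} on \eqref{2.7^*}. Either way every $J$-term becomes a plain $T$-term, for example $T(JY,Z,JX)=T(X,Z,Y)$ and $T(Y,JZ,JX)=-T(X,Y,Z)$, and you get $2(\nabla^g_XF)(Y,Z)=-T(X,Y,Z)+T(X,Z,Y)$. Substitute this into the claimed right-hand side. Using $T(JZ,Y,JX)=T(X,Y,Z)$, $T(JY,Z,JX)=T(X,Z,Y)$ and $T(JZ,JX,Y)+T(JY,JX,Z)=0$ (all consequences of \eqref{E-cond-2K} and $J^2=-I$), you obtain
\[
(\nabla^g_XF)(Y,Z)-(\nabla^g_{JZ}F)(JX,Y)-(\nabla^g_{JY}F)(JX,Z)=-T(X,Y,Z).
\]
So the printed identity would force $3T=0$.

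A concrete counterexample: take a strict nearly K\"ahler manifold. There $\Phi:=\nabla^gF$ is a nonzero $3$-form with $\Phi(JA,JB,C)=-\Phi(A,B,C)$. The connection given by $g(\nabla_XY,Z)=g(\nabla^g_XY,Z)-\tfrac12\Phi(X,Y,Z)$ satisfies \eqref{metein0} and \eqref{E-cond-2K}, and has $T=-\Phi$; this agrees with \eqref{Eq-2.13}. For it, the printed right-hand side equals $\Phi(X,Y,Z)$, while $2T(X,Y,Z)=-2\Phi(X,Y,Z)$.

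What your argument actually proves is
\[
T(X,Y,Z)=(\nabla^g_{JZ}F)(JX,Y)+(\nabla^g_{JY}F)(JX,Z)-(\nabla^g_XF)(Y,Z).
\]
The paper's computation in the proof of Theorem~\ref{T-s-w.c.m} also ends here: its last display equals $-2T(X,Y,Z)$. Its restatement as \eqref{Eq-2.13-sp} with $2T$ on the left is a normalization slip, and the corollary's version additionally has the opposite sign. You needed to carry the computation through and correct the target, not aim at the stated $2T$.
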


Among the sixteen Gray-Hervella classes \cite{gh-1980} of almost Hermitian manifolds, the condition \eqref{E-cond-2K} of a special Einstein connection is satisfied for the following classes:
\[
{\mathcal W}_1,\ \ 
{\mathcal W}_3,\ \ 
{\mathcal W}_4,\ \
{\mathcal W}_3\oplus {\mathcal W}_4,\ \
{\mathcal W}_1\oplus {\mathcal W}_3,\ \
{\mathcal W}_1\oplus {\mathcal W}_4,\ \
{\mathcal W}_1\oplus {\mathcal W}_3\oplus {\mathcal W}_4;
\]
consequently, the Einstein connection \eqref{Eq-2.13} of an almost Hermitian manifold, belonging to any of these classes is a special Einstein connection, see Theorem 3 of \cite{Prvanovic-95}.

\section{Einstein connection of a weak a.c.m. manifold}
\label{sec:w.a.c.m}

In this section, for a weak a.c.m. structure equipped with a linear connection with torsion $T$, we introduce a new Q-T-condition (trivial for the a.c.m. case) and explicitly represent an Einstein connection on the corresponding NGT space satisfying this condition.

\begin{definition}
\rm
A \textit{weak a.c.m. structure} on a connected  differentiable manifold $M^{2n+1}$ is a set $(f,Q,\xi$, $\eta,g)$, where
$g$ is a (pseudo-)Riemannian metric on $M$, $f$ is a skew-symmetric (1,1)-tensor of rank $2\,n$, $\xi$ is a unit vector field, $\eta$ is a 1-form such that ${\eta}({\xi})=1$, satisfying
\begin{align}\label{2.2}
 & g({f} X,{f} Y)= g(X,Q\,Y) 
 -{\eta}(X)\,{\eta}(Y),\quad
\end{align}
and $Q:=-{f}^2 + \eta\otimes\xi$ is a self-adjoint (1,1)-tensor field, see \cite{rov-survey24}. 
{
A \textit{weak contact metric structure} 
is a weak a.c.m. structure satisfying
$d\eta(X,Y)=g(X,fY)$.}
Set $\widetilde Q:=Q-I$.
\end{definition}

Note that $Q=I$ 
for a.c.m. manifolds.
For a weak a.c.m. structure $({f},Q,\xi,\eta,g>0)$, the 
tensor $Q$ is positive definite, and the following hold:
\begin{align*}
 {f}\,\xi=0,\quad 
 \eta\circ{f}=0,\quad 
 [Q,\,{f}]=0,\quad
 Q\,\xi=\xi,\quad
 \eta\circ Q=\eta.
\end{align*} 
{
For a weak contact metric structure, $\xi$ is a geodesic vector field, that is, $\nabla^g_\xi\,\xi=0$.

\begin{example}\rm
Let $(M^{2n+1},f,\xi,\eta,g)$ be an a.c.m.
(or, contact metric) manifold. Set
 \begin{align}\label{E-cont-lambda}
 f' = \lambda\,f,\qquad
 Q= \lambda^2 I + (1-\lambda^2)\,\eta\otimes\xi ,
 \qquad  
 g' = \lambda^{-1}\,g+(1-\lambda^{-1})\,\eta \otimes\eta, 
 \end{align}
where $\lambda>0$ is a differentiable function.
Then $(f',Q,\xi,\eta,g')$ is a weak a.c.m. (or, weak contact metric, respectively)
structure, e.g., \cite[Proposition~5.1]{rov-survey24}.
This construction shows that some weak a.c.m. manifolds naturally arise from classical ones
by the deformation~\eqref{E-cont-lambda}.
\end{example}
}

\begin{definition}\rm
Let $\nabla$ be a linear
connection with torsion $T$ on a weak a.c.m. manifold $(M^{2n+1},f,Q,\xi,\eta,g)$. 
We introduce the $Q$-$T$-\textit{condition} (that disappears when $Q=I$): 
\begin{align}\label{E-Q-T-condition}
 T(QX, Y,Z) = T(X,QY,Z) = T(X,Y,QZ) .
\end{align} 
\end{definition}
Note that \eqref{E-Q-T-condition} is not equivalent to
the $f^2$-torsion condition \eqref{E-Q-torsion} for a weak a.c.m. manifold, however, we can write \eqref{E-Q-T-condition} as an equation \eqref{E-tildeQ-torsion}, but with a different tensor $\widetilde Q$.

\begin{lemma}\label{L-QT-df-nablaF}
Let an Einstein connection $\nabla$ with torsion $T$ on a weak a.c.m. manifold $(M^{2n+1},f,Q,\xi,\eta,g)$ satisfy the $Q$-$T$-condition \eqref{E-Q-T-condition}. Then the following equalities~are~valid:
\begin{align}\label{E-Q-T-conditiondF}
 dF(QX, Y,Z) & = dF(X,QY,Z) = dF(X,Y,QZ), \\
\label{E-Q-T-conditionF}
 (\nabla^g_{QX} F)(Y,Z) &=(\nabla^g_X F)({QY},Z)=(\nabla^g_X F)(Y,{QZ}).
\end{align}
\end{lemma}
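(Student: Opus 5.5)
The plan is to express both $dF$ and $\nabla^g F$ linearly in terms of the torsion $T$, using identities already established for Einstein connections. The $Q$-$T$-condition then passes through those expressions, provided we know how $Q$ interacts with $f$.

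\textbf{First equality \eqref{E-Q-T-conditiondF}.} By \eqref{ein8},
\[
dF(X,Y,Z)=-T(X,Y,Z)-T(Y,Z,X)-T(Z,X,Y).
\]
Replace $X$ by $QX$. By \eqref{E-Q-T-condition}, $Q$ can be moved to any slot of $T$ in each of the three terms. For instance, $T(Y,Z,QX)=T(Y,QZ,X)$ and $T(Z,QX,Y)=T(Z,X,QY)$, and likewise for the other slots. This gives $dF(QX,Y,Z)=dF(X,QY,Z)=dF(X,Y,QZ)$ immediately.

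\textbf{Second equality \eqref{E-Q-T-conditionF}.} Here we use Prvanovi\'c's formula \eqref{2.7^*}, which writes $2(\nabla^g_X F)(Y,Z)$ as a sum of six torsion terms, some with arguments $fX,fY,fZ$. Before substituting, we need to know how $Q$ passes through arguments of the form $fQY$. Since $[Q,f]=0$, we have $f(QY)=Q(fY)$. So when $QY$ replaces $Y$ inside a term like $T(fZ,X,fQY)=T(fZ,X,QfY)$, the $Q$ sits on an argument of $T$ and can be moved by \eqref{E-Q-T-condition} to any other slot.

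With this, each of the six terms with $Y\mapsto QY$ equals the same term with $Q$ applied to, say, the slot holding $X$ or $fX$. In the latter case we rewrite $QfX$ as $fQX$. After doing this for every term, the sum is exactly the right-hand side of \eqref{2.7^*} with $X$ replaced by $QX$, so
\[
(\nabla^g_X F)(QY,Z)=(\nabla^g_{QX}F)(Y,Z).
\]
The same argument with $Z\mapsto QZ$ gives the remaining equality.

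\textbf{Expected obstacle.} The only delicate point is the bookkeeping in \eqref{2.7^*}. In every term, each of $X,Y,Z$ appears exactly once, possibly composed with $f$, so the transfer $Q$ goes through $f$ cleanly each time. We should check that $[Q,f]=0$ holds in the weak a.c.m. setting. It does: $Q=-f^2+\eta\otimes\xi$, with $f\xi=0$ and $\eta\circ f=0$.

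As a check, one could instead use \eqref{E-nabla-F-f} together with $(\nabla^g_X F)(Y,Z)=g(Y,(\nabla^g_Xf)Z)$, but this route gives no direct control of $Q$ in the derivative slot. So we rely on \eqref{2.7^*}.
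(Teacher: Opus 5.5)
Your proposal is correct and is essentially the paper's proof. The $dF$ identity follows from \eqref{ein8} by shifting $Q$ between slots via \eqref{E-Q-T-condition}, the $\nabla^g F$ identity follows from Prvanovi\'c's formula \eqref{2.7^*} together with $[Q,f]=0$, and your term-by-term bookkeeping (each of $X,Y,Z$ occurs exactly once in every term of \eqref{2.7^*}, and $fQ=Qf$ follows from $f\xi=0$, $\eta\circ f=0$) just spells out what the paper calls ``easy to verify''.
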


\begin{proof}
Based on \eqref{E-Q-T-condition} and \eqref{ein8},
it is easy to verify that the equality \eqref{E-Q-T-conditiondF} is satisfied. From \eqref{E-Q-T-condition} and \eqref{2.7^*}, using commutativity for $f$ and $Q$, we obtain 
the equality \eqref{E-Q-T-conditionF}.
\end{proof}

According to Lemma~\ref{L-QT-df-nablaF},
any solution in Theorem~\ref{T-w.c.m} and Example~\ref{ex-3.8} in what follows satisfies the Q-T-condition and, therefore, can serve as such an example.

The following lemma is used in the proof of Theorem~\ref{T-w.c.m} in what follows.

\begin{lemma}
Let $(f,Q,\xi,\eta,g)$ be a weak a.c.m. structure on an NGT space $(M^{2n+1},G=g+F,\nabla)$ with $F(X,Y)=g(X,fY)$.
If the $Q$-$T$-condition \eqref{E-Q-T-condition} is true, then
\begin{align}
\label{E-fy-fz2-xi}
\notag
T(fY,fZ,(I+Q)X) =
& -T(Y, Z,(Q+Q^2)X) +\eta(Z)\,T(Y,\xi,(I+Q)X) \\
\notag
& +\eta(Y)\,\{T(f^2Z,X,\xi) + T( X,\xi,f^2Z)
 -T(\xi,fZ,fX)\} \\ 
\notag
& -2\,(\nabla^g_{X}\,F)(Y,f^2Z) 
+2\,(\nabla^g_{X}\,F)(fY,fZ) \\
&+dF(Y,f^2Z,(I+Q)X) -dF(fY,fZ,(I+Q)X) ,  \\
\label{E-fy-fz3-xi}
\notag
T(fY,Z,(I+Q)X) &= -T(Y,fZ,\,(I+Q)X) +T(Y,Z,\,(Q^2-I)X) \\
 \notag
& -\eta(Y)\,\{T(f^2Z,X,\xi) +T(\xi,X,f^2Z) -T(\xi,fZ,fX)\} \\ 
\notag 
& +\eta(Z)\,T(\xi,Y,(I+Q)X) 
 {+}2\,(\nabla^g_{X}\,F)(Y,f^2Z) 
 {-}2\,(\nabla^g_{X}\,F)(fY,fZ)  \\
&+2(\nabla^g_{(I+Q)X}\,F)(Y,Z)
 -dF(Y,\,(I+f^2)Z,\,(I+Q)X)  .
\end{align} 
\end{lemma}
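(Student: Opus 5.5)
We start from Prvanovi\'c's identity \eqref{2.7^*}, which expresses $2(\nabla^g_X F)(Y,Z)$ through six torsion terms, two of which carry a pair of $f$'s. We evaluate \eqref{2.7^*} at a few substituted arguments and form a linear combination that isolates the combination $T(fY,fZ,(I+Q)X)$. The algebraic rules we rely on are:
\begin{itemize}
\item $f^2=-Q+\eta\otimes\xi$, with $f\xi=0$, $\eta\circ f=0$ and $[Q,f]=0$;
\item the $Q$-$T$-condition \eqref{E-Q-T-condition}, which moves $Q$ freely between the three slots of $T$;
\item \eqref{ein8}, which converts any cyclic sum of $T$ into $-dF$;
\item Lemma~\ref{L-QT-df-nablaF}, which moves $Q$ inside $dF$ and $\nabla^g F$.
\end{itemize}

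Concretely, we write \eqref{2.7^*} twice: once as $2(\nabla^g_X F)(fY,fZ)$ and once as $2(\nabla^g_X F)(Y,f^2Z)$, and subtract the second from the first.

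In these expansions, every term containing $f^2$ or $f^3$ is reduced by $f^2W=-QW+\eta(W)\xi$. Each reduction produces a $Q$-term, which the $Q$-$T$-condition lets us push onto the $X$-slot, plus an $\eta$-term. Since $f\xi=0$, the $\eta$-terms collapse to the explicit $\eta(Y)\{\cdots\}$ and $\eta(Z)\,T(Y,\xi,\cdot)$ corrections in \eqref{E-fy-fz2-xi}. The remaining pure torsion terms in which $X$ sits in the first or second slot are grouped into cyclic sums. By \eqref{ein8}, these sums become $dF(Y,f^2Z,(I+Q)X)$ and $dF(fY,fZ,(I+Q)X)$, with the $(I+Q)$ weights appearing once the $Q$ produced by $f^2$ is moved to the third slot. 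What is left should be $T(fY,fZ,X)+T(fY,fZ,QX)$ on one side and $-T(Y,Z,(Q+Q^2)X)$ on the other; the latter comes from $f^4=Q^2$ modulo $\eta$-terms, using skew-symmetry of $f$ to transfer $f$'s onto the $X$-slot, for example $T(fY,fZ,fW)$ type terms. This yields \eqref{E-fy-fz2-xi}.

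For \eqref{E-fy-fz3-xi} we take a different combination. We apply \eqref{2.7^*} with $X$ replaced by $(I+Q)X$, giving the term $2(\nabla^g_{(I+Q)X}F)(Y,Z)$, and with $Z\mapsto fZ$. We then use \eqref{E-fy-fz2-xi} with $Z$ replaced by $fZ$ (or $Y$ by $fY$) to eliminate the doubly-$f$ terms $T(fY,f^2Z,\cdot)$. After the same reductions ($f^2\to -Q$ plus $\eta$-terms, $Q$ moved to the last slot), the mixed terms $T(fY,Z,\cdot)$ and $T(Y,fZ,\cdot)$ remain together with $T(Y,Z,(Q^2-I)X)$. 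The $dF$ terms combine into the single term $dF(Y,(I+f^2)Z,(I+Q)X)$.

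The main obstacle is bookkeeping rather than concept. We must choose the right linear combination of instances of \eqref{2.7^*} so that the unwanted terms of type $T(fZ,X,fY)$, $T(X,fY,fZ)$ and so on cancel exactly, and we must track the $\eta$-corrections from $\xi$, since these do not obey the $Q$-$T$ simplifications in the same way (note $Q\xi=\xi$). I would first verify the identities in the case $Q=I$, $\eta=0$, which is the almost Hermitian situation behind \eqref{Eq-2.13}, to fix the coefficients. Then I would reinsert the $\eta\otimes\xi$ terms and check that each $\eta(Y)$ or $\eta(Z)$ term matches the stated brace.
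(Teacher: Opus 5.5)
Your derivation of \eqref{E-fy-fz2-xi} is the paper's and goes through. Subtracting the instances of \eqref{2.7^*} at $(fY,fZ)$ and $(Y,f^2Z)$ is, up to sign, the paper's subtraction at $(Y,fZ)$ and $(fY,Z)$ followed by $Z\mapsto fZ$. The terms $T(fY,f^2Z,fX)$ cancel, and the other terms with $f$ in the third slot cancel in pairs after $Q$ is moved by \eqref{E-Q-T-condition}. So no ``transfer of $f$ between slots'' is needed. None is available anyway: skew-symmetry of $f$ only gives $T(A,B,fC)=-g(fT(A,B),C)$, and the $Q$-$T$-condition moves only $Q$.

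For \eqref{E-fy-fz3-xi} the plan breaks at a concrete point. In \eqref{2.7^*} the two mixed terms are $T(Y,fZ,fX)+T(fY,Z,fX)$, with $fX$ (not $X$) in the third slot. Converting the other four terms by \eqref{ein8} shows that \eqref{2.7^*} is equivalent to
\begin{align*}
T(fY,Z,fX)+T(Y,fZ,fX) ={}& 2(\nabla^g_XF)(Y,Z)-T(Y,Z,X)-dF(Y,Z,X)\\
&-T(fY,fZ,X)-dF(fY,fZ,X).
\end{align*}
Now put $X\mapsto(I+Q)X$ and eliminate $T(fY,fZ,(I+Q)X)$ by \eqref{E-fy-fz2-xi}. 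This produces every term on the right of \eqref{E-fy-fz3-xi} after the first, except that the $\eta(Y)$-brace should contain $T(X,\xi,f^2Z)$ rather than $T(\xi,X,f^2Z)$. However, the left side is $T(fY,Z,f(I+Q)X)$, and the first right-hand term is $-T(Y,fZ,f(I+Q)X)$. Neither the $Q$-$T$-condition, nor \eqref{ein8}, nor \eqref{E-fy-fz2-xi} can strip that $f$. The paper's proof makes the same slip: it writes the rewritten \eqref{2.7^*} with $X$ in place of $fX$ in these two terms.

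This is not a bookkeeping issue; \eqref{E-fy-fz3-xi} is false as stated. For $Q=I$ and $X=Z=\xi$ it reduces to $T(fY,\xi,\xi)+T(Y,\xi,\xi)=2(\nabla^g_\xi F)(Y,\xi)$. Together with \eqref{E-w.c.m0} this forces $T(fY,\xi,\xi)=0$, whereas the corrected identity at this point merely reproduces \eqref{E-w.c.m0}.

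Here is a concrete counterexample. Let $(N,J,g_N)$ be K\"ahler and $\phi\in C^\infty(N)$ nonconstant. Set $M=N\times\mathbb{R}$, $g=g_N+e^{2\phi}dt^2$, $\xi=e^{-\phi}\partial_t$, $\eta=e^{\phi}dt$, $f=J$ on $TN$, $f\xi=0$. This is an a.c.m. structure, so the $Q$-$T$-condition is void. Then:
\begin{itemize}
\item $\nabla^g_\xi\xi=-\mathrm{grad}\,\phi$ and $dF=0$;
\item the only nonzero components of $\nabla^gF$ are $(\nabla^g_\xi F)(Y,\xi)=-(\nabla^g_\xi F)(\xi,Y)=-(JY)\phi$ for $Y\in TN$;
\item the tensor given by $T(Y,\xi)=-2\,((JY)\phi)\,\xi$ and $T(Y,Z)=0$ for $Y,Z\in TN$ satisfies \eqref{2.7^*} on every component;
\item here \eqref{genconein} gives $\nabla=\nabla^g+\tfrac12T$, and one checks \eqref{metein0} for it directly.
\end{itemize}
But $T(JY,\xi,\xi)=2\,Y\phi\neq0$, so \eqref{E-fy-fz3-xi} fails. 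Since this example also has $\nabla^g_\xi\xi\neq0$, the conclusion the paper draws from \eqref{E-fy-fz3-xi} fails too.

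What your route does prove is the corrected identity, with $f(I+Q)X$ in the third slot of the two mixed terms.
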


\begin{proof}
Using $Z\to fZ$ in \eqref{2.7^*}
and $Q=-f^2+\eta\otimes\xi$, we get
\begin{align*}
 & 2(\nabla^g_{X}\,F)(Y,fZ)
 =-T(fZ,X,Y) -T(X,Y,fZ) +T(QZ,X,fY) -\eta(Z)\,T(\xi,X,fY) \\
 &+T(X,fY,QZ) -\eta(Z)\,T(X,fY,\xi) 
 -T(Y,QZ,fX) +\eta(Z)\,T(Y,\xi,fX) +T(fY,fZ,fX) . 
\end{align*} 
Using $Y\to fY$ in \eqref{2.7^*}, we get
\begin{align*}
 & 2(\nabla^g_{X}\,F)(fY,Z)
 =-T(Z,X,fY) -T(X,fY,Z) 
 +T(fZ,X,QY) -\eta(Y)\,T(fZ,X,\xi) \\
 &+T(X,QY,fZ) -\eta(Y)\,T(X,\xi,fZ) 
 -T(QY,Z,fX) +\eta(Y)\,T(\xi,Z,fX) +T(fY,fZ,fX) . 
\end{align*} 
Subtracting the above equations and using 
the $Q$-$T$-condition \eqref{E-Q-T-condition}, we get
\begin{align*} 
 & \{T(Z,X+QX,fY) +T(X+QX,fY,Z)\} \\
 &\ \ -\{T(X+QX,Y,fZ) +T(fZ,X+QX,Y)\} \\
 &\ \ +\eta(Z)\,\{T(Y,\xi,fX)-T(\xi,X,fY) -T(X,fY,\xi)\} \\ 
 &\ \ +\eta(Y)\,\{T(fZ,X,\xi) +T(X,\xi,fZ) 
 -T(\xi,Z,fX)\}  \\ 
 &=2\,(\nabla^g_{X}\,F)(Y,fZ) 
  -2\,(\nabla^g_{X}\,F)(fY,Z). 
\end{align*} 
Applying \eqref{ein8} twice to the above equation, we get 
\begin{align}\label{E-fy-fz1-xi}
\notag
T(fY,Z,X+QX) =&\ T(Y,fZ,X+QX) 
-2\,(\nabla^g_{X}\,F)(Y,fZ) 
+2\,(\nabla^g_{X}\,F)(fY,Z) \\
\notag
&+dF(Y,fZ,X+QX) -dF(fY,Z,X+QX) \\
\notag
& +\eta(Y)\,\{T(fZ,X,\xi) +T(X,\xi,fZ)
 -T(\xi,Z,fX)\} \\
& +\eta(Z)\,\{T(Y,\xi,fX)-T(X,\xi,fY)
 -T(X,fY,\xi)\} . 
\end{align} 
Replacing $Z$ with $fZ$ in \eqref{E-fy-fz1-xi} and using the $Q$-$T$-condition \eqref{E-Q-T-condition}, gives~\eqref{E-fy-fz2-xi}.
Using \eqref{ein8}, we rewrite \eqref{2.7^*} as
\begin{align*}
 T(fY,Z,X) =& -T(Y,fZ,X) -T(Y,Z,X)-dF(Y,Z,X) \\
 &-T(fY,fZ,X)-dF(fY,fZ,X) +2\,(\nabla^g_X\,F)(Y,Z).
\end{align*}
Replacing $X$ with $(I+Q)X$ in the above equation
and using \eqref{E-fy-fz2-xi}, we obtain~\eqref{E-fy-fz3-xi}.
\end{proof}

{
We show that Einstein connections exist for weak a.c.m. manifolds with a geodesic vector field $\xi$,
and present explicitly their torsion tensor.
}

\begin{theorem}\label{T-w.c.m}
Let $(f,Q,\xi,\eta,g)$ be a weak a.c.m. structure on an NGT space $(M^{2n+1},G=g+F,\nabla)$ with $F(X,Y)=g(X,fY)$. If the $Q$-$T$-condition \eqref{E-Q-T-condition} is true, then $\nabla^g_{\xi}\,\xi=0$ holds, and for all $X,Y\in\mathfrak{X}_M$ we have
\begin{align}\label{E-w.c.m-xi}
 T(\xi,Y,\xi) & = T(Y,\xi,\xi) = 0 , \\
\label{E-xi-YX}
  T(\xi,Y,X) &= dF((I+f)^{-1}Y,\xi,X) 
  -2\,(\nabla^g_X\,F)((I+f)^{-1}Y,\xi), \\
\label{E-x-y-xi}
\notag
 T(X,Y,\xi) &= -dF(X,Y,\xi) +dF((I+f)^{-1}Y,\xi,X) 
 -2\,(\nabla^g_X\,F)((I+f)^{-1}Y,\xi) \\
 & -dF((I+f)^{-1}X,\xi,Y) 
  +2\,(\nabla^g_Y\,F)((I+f)^{-1}X,\xi) .
\end{align}
Moreover, if the tensor $P:={I}-f^2$ has rank $2n+1$,
then 
for $X,Y,Z\in\ker\eta$ 
we have
\begin{align}\label{Eq-QT-solution2}
\notag
 & 2\,T(Y,Z,\,(I+\tfrac12\,\widetilde Q)^2f^4 X) = 
 (\nabla^g_X F)((f+f^3)Y,fZ) 
 +(\nabla^g_Y F)(f^2Z,X) 
 +(\nabla^g_Z F)(f^2X,Y) \\
\notag
&\quad 
-(\nabla^g_{fX} F)(f^3Y,Z) 
-(\nabla^g_{fX} F)(f^2Y,fZ) 
+(\nabla^g_{fY} F)(f^3Z,X)
+(\nabla^g_{fZ} F)(f^2X,fY) \\
\notag 
&\quad
+(\nabla^g F)(P^{-1}(2\,\widetilde Q+\widetilde Q^2)f^2X,fY,Z)
-(\nabla^g F)(P^{-1}(2\,\widetilde Q+\widetilde Q^2)f^2X,Y,fZ) \\
&\quad-(\nabla^g F)(\widetilde Qf^2X,Y,Z) 
-\tfrac12\,dF((3\,\widetilde Q+2\,\widetilde Q^2)X,fY,fZ) 
+\tfrac12\,dF((3\,\widetilde Q+\widetilde Q^2),Y,Z). 
\end{align}
\end{theorem}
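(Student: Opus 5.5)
The proof has two parts: the $\xi$-components of $T$, and the restriction to $\ker\eta$.

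\emph{$\xi$-components.} The starting point is the $Q$-$T$-condition \eqref{E-Q-T-condition} together with $Q\xi=\xi$ and $\eta\circ Q=\eta$. Put $X=\xi$ in \eqref{E-Q-T-condition} and pair with $Z$. This gives $T(\xi,Y,Z)=T(\xi,QY,Z)=T(\xi,Y,QZ)$, so $T(\xi,\widetilde QY,Z)=0$ and $T(\xi,Y,\widetilde QZ)=0$. Similarly, putting two arguments equal to $\xi$ gives $T(\xi,Y,\xi)=T(\xi,QY,\xi)$.

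To get \eqref{E-w.c.m-xi}, I would substitute $X=Z=\xi$ into \eqref{2.7^*}. Since $f\xi=0$, the left side is $2(\nabla^g_\xi F)(Y,\xi)=2g(Y,(\nabla^g_\xi f)\xi)=-2g(Y,f\nabla^g_\xi\xi)$. The right side reduces to $-T(\xi,\xi,Y)-T(\xi,Y,\xi)=-T(\xi,Y,\xi)$ plus terms containing $f\xi=0$. Feeding in \eqref{ein8} with two entries $\xi$, namely $dF(\xi,Y,\xi)=0=-T(\xi,Y,\xi)-T(Y,\xi,\xi)$, gives $T(Y,\xi,\xi)=-T(\xi,Y,\xi)$. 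Then $Y\to fY$ and $Y\to\xi$ together with the $Q$-invariance should force $T(\xi,Y,\xi)=0$, hence $f\nabla^g_\xi\xi=0$. Since $g(\nabla^g_\xi\xi,\xi)=0$ and $\ker f=\mathrm{span}\,\xi$, this yields $\nabla^g_\xi\xi=0$.

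\emph{Formulas \eqref{E-xi-YX} and \eqref{E-x-y-xi}.} For \eqref{E-xi-YX}, set $Z=\xi$ in \eqref{2.7^*} and use \eqref{ein8}. The terms with $f\xi$ drop, and what remains should be a relation of the form $T(\xi,(I+f)Y,X)=dF(Y,\xi,X)-2(\nabla^g_XF)(Y,\xi)$, possibly after also using \eqref{E-fy-fz3-xi} with $Z=\xi$. Since $f$ is skew-symmetric, $I+f$ is invertible (this is clear for $g>0$, and I would assume it in general). Replacing $Y$ by $(I+f)^{-1}Y$ then gives \eqref{E-xi-YX}. For \eqref{E-x-y-xi}, apply \eqref{ein8} to $(X,Y,\xi)$, which gives $T(X,Y,\xi)=-dF(X,Y,\xi)-T(Y,\xi,X)-T(\xi,X,Y)$. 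Rewriting $T(Y,\xi,X)=-T(\xi,Y,X)$ and substituting \eqref{E-xi-YX} twice produces exactly the displayed formula.

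\emph{Restriction to $\ker\eta$.} For $X,Y,Z\in\ker\eta$ we have $Q=-f^2$, so the $Q$-$T$-condition becomes the $f^2$-torsion condition \eqref{E-Q-torsion}, and \eqref{E-tildeQ-torsion} holds with $\widetilde Q=Q-I=-I-f^2$. I would then rerun the proof of the weak almost Hermitian theorem \eqref{Eq-QT-solution}. That proof combines the formulas \eqref{E-fy-fz2-xi} and \eqref{E-fy-fz3-xi}, now with all $\eta$-terms vanishing. It substitutes $Y\to fY$, $Z\to fZ$, $X\to fX$ repeatedly and eliminates the $f$-shifted torsion terms. Finally it inverts $P=I-f^2$, which is nondegenerate by hypothesis, to isolate $T(Y,Z,(I+\frac12\widetilde Q)^2f^4X)$.

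The main obstacle is bookkeeping. The argument $f^kX$ stays in $\ker\eta$, but after applying $P^{-1}$ or \eqref{ein8} some intermediate terms acquire $\xi$-components. These must be shown to vanish, or be absorbed, using \eqref{E-w.c.m-xi} and \eqref{E-xi-YX}. I would check this term by term, using $\eta\circ f=0$ so that $f$-images of any vector lie in $\ker\eta$. This should reduce the computation to the almost Hermitian one verbatim.
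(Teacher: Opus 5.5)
There is a genuine gap in the $\xi$-part, and it cannot be closed. Your overall architecture is the paper's: two-$\xi$ components, then $Z=\xi$, then \eqref{ein8} for \eqref{E-x-y-xi}, then reduction of the horizontal part to the weak almost Hermitian formula.

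\textbf{Where the argument fails.} The gap is at the step where $Y\to fY$, $Y\to\xi$ and $Q$-invariance ``should force'' $T(\xi,Y,\xi)=0$. They do not force it.
\begin{itemize}
\item $Y\to\xi$ is trivial.
\item $Y\to fY$ only gives $T(fY,\xi,\xi)=2(\nabla^g_\xi F)(fY,\xi)$.
\item $Q$-invariance only gives $T(\xi,\widetilde QY,\xi)=0$. This is empty in the a.c.m.\ case $Q=I$, where the $Q$-$T$-condition imposes nothing.
\end{itemize}
Every term of \eqref{2.7^*} built with $f$ has a horizontal third slot. Hence the only equations containing $T(\cdot,\xi,\xi)$ are those with two arguments equal to $\xi$, and they all reduce to $T(Y,\xi,\xi)=2(\nabla^g_\xi F)(Y,\xi)$. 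Nothing forces this to vanish.

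\textbf{The same problem for \eqref{E-xi-YX}.} Putting $Z=\xi$ in \eqref{2.7^*} and using \eqref{ein8} gives
\[
T(\xi,Y,X)+T(\xi,fY,fX)=dF(Y,\xi,X)-2\,(\nabla^g_XF)(Y,\xi).
\]
Here $f$ acts on \emph{both} $Y$ and $X$, so this is not a relation for $T(\xi,(I+f)Y,X)$. The map $B\mapsto B+B(f\cdot,f\cdot)$ annihilates $f$-anti-invariant bilinear forms on $\ker\eta$, so it cannot be inverted.

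The paper obtains the missing relation $T(Y+fY,\xi,\xi)=2(\nabla^g_\xi F)(Y,\xi)$ from \eqref{E-fy-fz3-xi} at $X=Z=\xi$. However, the proof of that lemma rewrites the terms $T(Y,fZ,fX)+T(fY,Z,fX)$ of \eqref{2.7^*} as $T(Y,fZ,X)+T(fY,Z,X)$, which is not justified. So your fallback ``possibly after also using \eqref{E-fy-fz3-xi}'' does not close the gap.

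\textbf{Counterexample.} No argument can close it, because $\nabla^g_\xi\xi=0$, \eqref{E-w.c.m-xi} and \eqref{E-xi-YX} fail already for $Q=I$.
\begin{itemize}
\item Take $M=\{r>0\}\subset\mathbb{R}^3$ with $g=dr^2+r^2d\theta^2+dz^2$, $\xi=r^{-1}\partial_\theta$, $\eta=r\,d\theta$, $f\partial_r=\partial_z$, $f\partial_z=-\partial_r$, $f\xi=0$.
\item This is an a.c.m.\ structure with $\nabla^g_\xi\xi=-r^{-1}\partial_r\neq0$, $F=-dr\wedge dz$ and $dF=0$.
\item Up to skew-symmetry, the only nonzero component of $\nabla^gF$ is $(\nabla^g_\xi F)(\partial_z,\xi)=r^{-1}$.
\item Let $\nabla=\nabla^g+K$ with $K_{\partial_z}\xi=r^{-1}\xi$, $K_\xi\partial_z=-r^{-1}\xi$ and all other components zero. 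Then $T(\partial_z,\xi)=2r^{-1}\xi$.
\item In the orthonormal frame $(\partial_r,\partial_z,\xi)$, $G$ has constant components. A direct check gives $(\nabla_XG)(Y,Z)=-G(T(X,Y),Z)$ for all frame vectors.
\end{itemize}
So $\nabla\neq\nabla^g$ is an Einstein connection. Yet $T(\partial_z,\xi,\xi)=2/r\neq0$. Also, for $Y=\partial_z$, $X=\xi$, the left side of \eqref{E-xi-YX} is $-2/r$ while the right side is $-1/r$. Consequently \eqref{E-x-y-xi}, which is derived from \eqref{E-xi-YX}, also fails.

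\textbf{The horizontal part.} Only this part of the statement survives, and there your reduction is the same as the paper's. Your bookkeeping worry is unnecessary. The operators $f$, $\widetilde Q$ and $P^{-1}$ preserve $\ker\eta$. Moreover, \eqref{2.7^*} and \eqref{ein8} with horizontal arguments involve only horizontal components of $T$, so no $\xi$-components ever appear.
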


\begin{proof}
Applying $X=Y=\xi$ to \eqref{2.7^*}, we have 
the equality
\begin{align}\label{E-w.c.m0}
 T(Z,\xi,\xi) = 2\,(\nabla^g_\xi F)(Z,\xi).
\end{align}
From \eqref{E-fy-fz3-xi} with $X=Z=\xi$ we obtain
\begin{align}\label{E-w.c.m-xiB}
 T(Y+fY,\xi,\xi)  = 2\,(\nabla^g_\xi F)(Y,\xi) .  
\end{align}
Comparing \eqref{E-w.c.m0} and \eqref{E-w.c.m-xiB},
we get \eqref{E-w.c.m-xi}
and 
$(\nabla^g_\xi F)(Y,\xi)=0$. 
Hence, $(\nabla^g_{\xi}\,f)\xi=0$ is true,
that is, $\xi$ is a geodesic vector field: $\nabla^g_{\xi}\,\xi=0$.
Due~to the skew-symmetry in the first two arguments,
$T(\xi,Y,\xi)=0$ is also true.
Using \eqref{E-Q-T-condition}, \eqref{E-Q-T-conditiondF}, \eqref{E-Q-T-conditionF} and $Z=\xi$ in \eqref{E-fy-fz3-xi}, we obtain \eqref{E-xi-YX}.
Using \eqref{E-xi-YX} in \eqref{ein8} with $Z=\xi$, we obtain \eqref{E-x-y-xi}:
\begin{align*}
 T(X,Y,\xi) =&\, -dF(X,Y,\xi) +T(\xi,Y,X) -T(\xi,X,Y) \\
 =&\, -dF(X,Y,\xi) 
 +dF((I+f)^{-1}Y,\xi,X) 
  -2\,(\nabla^g_X\,F)((I+f)^{-1}Y,\xi) \\
 &
 -dF((I+f)^{-1}X,\xi,Y) 
  +2\,(\nabla^g_Y\,F)((I+f)^{-1}X,\xi) .
\end{align*} 
Along $\ker\eta$
we have $\widetilde Q=-f^2-I$. Thus, the torsion of $\nabla$ for $X,Y,Z\in\ker\eta$ is given by \eqref{Eq-QT-solution2} -- the same formula as \eqref{Eq-QT-solution} for the weak almost Hermitian~case.
\end{proof}

\begin{example}\rm
Any weak contact metric manifold (for example, weak K-contact and Sasaki\-an manifolds) satisfies 
$\nabla^g_\xi\,\xi=0$, and by 
Theorem~\ref{T-w.c.m}, admits an Einstein connection. Another important weak metric structure 
$(f,Q,\xi,\eta,g)$ admitting Einstein connection appears on weak Kenmotsu manifolds, given by \eqref{2.2} and
$(\nabla^g_X\,f)Y=g(fX,Y)\,\xi-\eta(Y)\,fX$.
These manifolds are locally warped products of weak K\"ahler manifolds and a real line (in the $\xi$-direction), and $\nabla^g_X\,\xi=X-\eta(X)\,\xi$
holds, hence $\nabla^g_\xi\,\xi=0$,
see \cite{rov-survey24}.
\end{example}

For a nonsymmetric Riemannian space $(M,G=g+F)$,
the tensor $P={I}-f^2$ is positive definite, and hence, non-degenerate. 
Therefore, we have the following.

\begin{corollary}
Let $\nabla$ be an Einstein connection of a weak a.c.m. manifold 
considered as a nonsymmetric Riemannian space 
with $F(X,Y)=g(X,fY)$.  
If~the Q-T-condition \eqref{E-Q-T-condition} is valid, 
then 
$\nabla^g_{\xi}\,\xi=0$
holds and the torsion $T$ of $\nabla$ is given by
\eqref{E-w.c.m-xi}--\eqref{Eq-QT-solution2}.
\end{corollary}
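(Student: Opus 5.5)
This corollary is a direct specialization of Theorem~\ref{T-w.c.m}, so the plan is to check that its hypotheses hold in the nonsymmetric Riemannian setting and then read off the conclusions. The Theorem has two parts. The first part gives $\nabla^g_\xi\xi=0$ together with \eqref{E-w.c.m-xi}--\eqref{E-x-y-xi}. It needs only that $(f,Q,\xi,\eta,g)$ is a weak a.c.m. structure on an NGT space with $F(X,Y)=g(X,fY)$ and that the Q-T-condition \eqref{E-Q-T-condition} holds. These are exactly the corollary's hypotheses, so that part transfers verbatim.

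The second part, formula \eqref{Eq-QT-solution2} on $\ker\eta$, additionally requires the tensor $P=I-f^2$ to have rank $2n+1$. I will show this holds automatically when $g$ is positive definite. Since $f$ is $g$-skew-symmetric, for any $X$ we have
\begin{align*}
 g(PX,X)=g(X,X)-g(f^2X,X)=g(X,X)+g(fX,fX)\ \ge\ g(X,X).
\end{align*}
Hence $P$ is self-adjoint and positive definite, so it is invertible and has rank $2n+1$. I will also note that the expressions $(I+f)^{-1}$ appearing in \eqref{E-xi-YX}--\eqref{E-x-y-xi} make sense. Indeed,
\begin{align*}
 g((I+f)X,X)=g(X,X)>0 \quad\text{for } X\ne0,
\end{align*}
so $I+f$ is injective and therefore invertible, as the Theorem implicitly uses.

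With both hypotheses verified, Theorem~\ref{T-w.c.m} yields the geodesic property of $\xi$ and the full list \eqref{E-w.c.m-xi}--\eqref{Eq-QT-solution2}. These formulas determine $T$ completely. By \eqref{E-w.c.m-xi} and \eqref{E-xi-YX}--\eqref{E-x-y-xi}, all components with a $\xi$ entry are given. On $\ker\eta$, the operator $(I+\tfrac12\widetilde Q)^2f^4$ is invertible there, since $f$ is nondegenerate on $\ker\eta$ and $I+\tfrac12\widetilde Q=\tfrac12(I-f^2)=\tfrac12P$ is invertible. So \eqref{Eq-QT-solution2} fixes the remaining components. The only step needing any care is this invertibility check, and positive definiteness handles it immediately.
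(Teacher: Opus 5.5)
Your proposal is correct and follows the paper's argument: the corollary comes from Theorem~\ref{T-w.c.m}, since positive definiteness of $g$ makes $P=I-f^2$ positive definite and hence of rank $2n+1$. Your extra checks make explicit what the paper leaves implicit: that $I+f$ is invertible, and that $(I+\tfrac12\widetilde Q)^2f^4=\tfrac14P^2f^4$ is invertible on $\ker\eta$, so that $T$ is fully determined.
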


The following result generalizes Theorems 3.3 and 3.7
in \cite{rst-68}.

\begin{theorem}\label{T-s-w.c.m}
Let $\nabla$ be an Einstein connection of an a.c.m. manifold $(M^{2n+1},f,\xi,\eta,g)$ considered as a nonsymmetric 
manifold $(M,G=g+F)$, where $F(X,Y)=g(X,fY)$. 
Then 
$\nabla^g_{\xi}\,\xi=0$
and 
\eqref{E-w.c.m-xi}--\eqref{E-x-y-xi} are true,
and for $X,Y,Z\in\ker\eta$ we~get
\begin{align}\label{Eq-2.13b} 
\notag
 2\,T(Y,Z,X) &= 2(\nabla^g_{fX}F)(fY,Z)
-(\nabla^g_{fY}F)(fZ,X) -(\nabla^g_{fZ}F)(X,fY) \\
&\quad
-(\nabla^g_YF)(Z,X) -(\nabla^g_ZF)(X,Y).
\end{align}
For a special Einstein connection $\nabla$ on $M$, see \eqref{E-cond-2K}, for $X,Y,Z\in\ker\eta$ we have  
\begin{equation}\label{Eq-2.13-sp}
 2\,T(X,Y,Z) = 
 (\nabla^g_{fZ}F)(fX,Y)
+(\nabla^g_{fY}F)(fX,Z) 
-(\nabla^g_XF)(Y,Z) .
\end{equation}
\end{theorem}

\begin{proof}
{
Although for $Q=I$ the equation \eqref{Eq-QT-solution2} reduces to \eqref{Eq-2.13b}, we will present its proof separately.}
Let $X,Y,Z\in\ker\eta$, then
\(
\eta(X)=\eta(Y)=\eta(Z)=0,
\)
and
\(
f^2X=-X,\ f^2Y=-Y,\ f^2Z=-Z.
\)
Differentiating the equality
$f^2=-I+\eta\otimes\xi$
and using $Z\in\ker\eta$, gives
\begin{equation}\label{acm1}
(\nabla^g_Xf)\,fZ+f(\nabla^g_Xf)Z
=(\nabla^g_X\eta)(Z)\,\xi .
\end{equation}
Using \eqref{acm1} and \eqref{E-nabla-F-f},
we get for $X,Y,Z\in\ker\eta$:
\begin{equation}\label{acm2}
(\nabla^g_XF)(fY,fZ)=-(\nabla^g_XF)(Y,Z),\quad
(\nabla^g_XF)(fY,Z)=(\nabla^g_XF)(Y,fZ).
\end{equation}

Replacing $Y\mapsto fY$ and $Z\mapsto fZ$ 
in \eqref{2.7^*}, then using $f^2=-I$ on $\cal D$, yields
\begin{align}\label{acmM1}
\notag
2(\nabla^g_XF)(fY,fZ)
&=
-T(fZ,X,fY)-T(X,fY,fZ)-T(Z,X,Y) \\
&\quad -T(X,Y,Z)-T(fY,Z,fX)-T(Y,fZ,fX).
\end{align}
Subtracting \eqref{acmM1} from \eqref{2.7^*} and using \eqref{acm2}, we obtain
\begin{equation}\label{acmM2}
2(\nabla^g_XF)(Y,Z)=T(Y,fZ,fX)+T(fY,Z,fX).
\end{equation}
Substituting \eqref{acmM2} into \eqref{2.7^*}, we get
\begin{equation}\label{acmM3}
T(X,Y,Z)+T(Z,X,Y)+T(X,fY,fZ)+T(fZ,X,fY)=0.
\end{equation}
Replacing $X\mapsto fX$, $Y\mapsto fY$ in \eqref{acmM2}, we obtain
\begin{equation}\label{acmM4}
T(fY,fZ,X)=T(Y,Z,X)-2(\nabla^g_{fX}F)(fY,Z).
\end{equation}
The cyclic sum of \eqref{acmM4}, obtained using 
\eqref{acmM3} and \eqref{acm2}, is
\begin{equation}\label{acmM4-cycllic}
 \hskip-1.5mm
 (\nabla^g_XF)(Y,Z) {+}(\nabla^g_YF)(Z,X)
 {+}(\nabla^g_ZF)(X,Y)
 {=} -T(X,Y,Z) {-}T(Y,Z,X) {-}T(Z,X,Y).
\end{equation}
Replacing $Y\mapsto fY$ and $Z\mapsto fZ$ in \eqref{acmM4-cycllic}, we get
\begin{align}\label{acmM5}
\notag
&-(\nabla^g_XF)(Y,Z)
+(\nabla^g_{fY}F)(fZ,X)+(\nabla^g_{fZ}F)(X,fY) \\
&= -T(X,fY,fZ)-T(fY,fZ,X)-T(fZ,X,fY).
\end{align}
From \eqref{acmM5}, using \eqref{acmM4} and \eqref{acmM3}, we obtain
\begin{align}\label{acmM7}
\notag
-(\nabla^g_XF)(Y,Z)
&+(\nabla^g_{fY}F)(fZ,X)+(\nabla^g_{fZ}F)(X,fY) \\
&=
T(X,Y,Z)+T(Z,X,Y)-T(Y,Z,X)  +2(\nabla^g_{fX}F)(fY,Z).
\end{align}
The cyclic sum of \eqref{acmM7} is
\begin{align*}
 (\nabla^g_{fY}F)(fZ,X) &+(\nabla^g_{fZ}F)(X,fY)
+(\nabla^g_YF)(Z,X)+(\nabla^g_ZF)(X,Y) \\
 & = -2\,T(Y,Z,X) +2(\nabla^g_{fX}F)(fY,Z).
\end{align*}
Therefore, \eqref{Eq-2.13b} is true.

Let's prove \eqref{Eq-2.13-sp}. Since \(\nabla\) is a special Einstein connection, \eqref{E-cond-2K} is true.
Using \eqref{acmM2}, we transform the two terms on the left-hand side by \eqref{E-cond-2K}, and obtain for $X,Y,Z\in\ker\eta$,
\begin{align*}
T(Y,fZ,fX) &=-T(X,fZ,fY)=T(fZ,X,fY) 
=-T(Y,X,Z)=-T(X,Y,Z) , \\ 
T(fY,Z,fX) & = -T(X,Z,f^2Y) =T(X,Z,Y). 
\end{align*}
Hence, \eqref{2.7^*} reduces to the following:
\begin{equation}\label{spA}
2(\nabla^g_XF)(Y,Z)=-T(X,Y,Z)+T(X,Z,Y).
\end{equation}
Applying \eqref{acmM2} to \eqref{spA} with \(X\mapsto fZ\), \(Y\mapsto fX\), \(Z\mapsto Y\), and \(f^2=-I\) on \(\ker\eta\), gives
\begin{align*}
2(\nabla^g_{fZ}F)(fX,Y)
=T(fX,fY,f^2Z)+T(f^2X,Y,f^2Z) 
=-T(fX,fY,Z)+T(X,Y,Z).
\end{align*}
Similarly, applying \eqref{acmM2} to \eqref{spA} with \(X\mapsto fY\), \(Y\mapsto fX\), \(Z\mapsto Z\), we obtain
\begin{align}\label{spC}
\notag
2(\nabla^g_{fY}F)(fX,Z)
&=T(fX,fZ,f^2Y)+T(f^2X,Z,f^2Y) \\
&=-T(fX,fZ,Y)+T(X,Z,Y), \\
\label{spB}
2(\nabla^g_{fY}F)(fX,Z)
&=-T(fX,fZ,Y)+T(X,Z,Y).
\end{align}
Subtracting \eqref{spB} and \eqref{spC} from \eqref{spA}, we obtain
\begin{align*}
&2(\nabla^g_XF)(Y,Z)
-2(\nabla^g_{fZ}F)(fX,Y)
-2(\nabla^g_{fY}F)(fX,Z) \\
&=
\bigl(-T(X,Y,Z)+T(X,Z,Y)\bigr)
-\bigl(-T(fX,fY,Z)+T(X,Y,Z)\bigr) \\
&\quad
-\bigl(-T(fX,fZ,Y)+T(X,Z,Y)\bigr) \\
&=
-2T(X,Y,Z)+T(fX,fY,Z)+T(fX,fZ,Y).
\end{align*}
 Using \eqref{E-cond-2K}, we have
$T(fY,fX,Z)+T(fZ,fX,Y)=0$;
therefore,
\[
2(\nabla^g_XF)(Y,Z)
-2(\nabla^g_{fZ}F)(fX,Y)
-2(\nabla^g_{fY}F)(fX,Z)
=
-2\,T(X,Y,Z),
\]
that is, \eqref{Eq-2.13-sp} is true
for $X,Y,Z\in\ker\eta$.
If one of the vector fields $X,Y,Z$ is \(\xi\), then both sides vanish:
indeed, \(f\xi=0\) and \((\nabla^g_\xi\,f)\xi=0\) hold, and for a special Einstein connection one has
\(T(\xi,\cdot,\cdot)=T(\cdot,\cdot,\xi)=0\).
Hence, \eqref{Eq-2.13-sp} is satisfied for arbitrary vector fields on \(M\).
\end{proof}

{

The Einstein connection expressed using covariant derivative $\nabla^g F$ is obtained based on Theorem~\ref{T-s-w.c.m} and the equations \eqref{genconein}, \eqref{E-cond-2K}, and using \eqref{acm2}.

\begin{corollary}
The Einstein connection $\nabla$ on a.c.m. manifold 
$(M^{2n+1},f,\xi,\eta,g)$ satisfying $\nabla^g_\xi\,\xi=0$ is given for $X,Y,Z\in\ker\eta$ by
\begin{align}\label{genconein-acm} \notag
 g(\nabla_XY,Z) =&\ g(\nabla^g_XY,Z) 
 + \tfrac{1}{4} \big\{ 2\,(\nabla^g_{fZ}F)(fX,Y) 
 - (\nabla^g_{fX}F)(fY,Z) \\ 
 \notag 
 & + (\nabla^g_{fY}F)(fX,Z) 
 - (\nabla^g_{X}F)(Y,Z) + (\nabla^g_{Y}F)(X,Z) 
   - (\nabla^g_{X}F)(fY,Z) \\ 
 & - (\nabla^g_{fY}F)(X,Z) - (\nabla^g_{Y}F)(fX,Z) 
   - (\nabla^g_{fX}F)(Y,Z) \big\},
\end{align}
and the special Einstein connection is given by
\begin{align*}
\notag 
 g(\nabla_XY,Z) =&\ g(\nabla^g_XY,Z) + \tfrac{1}{4} \big\{ 2\,(\nabla^g_{fZ}F)(fX,Y) - (\nabla^g_{fX}F)(fY,Z) \\ 
 & + (\nabla^g_{fY}F)(fX,Z) 
 -(\nabla^g_{X}F)(Y,Z) +(\nabla^g_{Y}F)(X,Z) \big\}.
\end{align*}
\end{corollary}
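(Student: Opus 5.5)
The plan is to substitute the torsion formulas of Theorem~\ref{T-s-w.c.m} into the representation \eqref{genconein}, namely
\[
g(\nabla_XY,Z)=g(\nabla^g_XY,Z)+\tfrac12\{T(X,Y,Z)-T(Z,X,fY)+T(Y,Z,fX)\},
\]
restricted to $X,Y,Z\in\ker\eta$. The hypothesis $\nabla^g_\xi\xi=0$ is the one Theorem~\ref{T-s-w.c.m} already yields, so its formulas \eqref{Eq-2.13b} and \eqref{Eq-2.13-sp} are available. One subtlety is that $fX,fY\in\ker\eta$ (since $\eta\circ f=0$), so \eqref{Eq-2.13b} may be applied with arguments $fX,fY$ as well. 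Inside these the identities $f^2=-I$ on $\ker\eta$ and \eqref{acm2}, i.e.\ $(\nabla^g_UF)(fV,fW)=-(\nabla^g_UF)(V,W)$ and $(\nabla^g_UF)(fV,W)=(\nabla^g_UF)(V,fW)$, let us move every $f$ onto the first slot of $\nabla^g F$ or cancel it in pairs.

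For the special connection I will start there, since it is the shorter computation. By \eqref{E-cond-2K} we have $T(Y,Z,fX)=-T(X,Z,fY)=T(Z,X,fY)$, so the last two terms in braces cancel and $g(\nabla_XY,Z)=g(\nabla^g_XY,Z)+\tfrac12T(X,Y,Z)$, consistent with $\nabla=\nabla^g+\tfrac12T$. Next I insert \eqref{Eq-2.13-sp} in the form $2T(X,Y,Z)=(\nabla^g_{fZ}F)(fX,Y)+(\nabla^g_{fY}F)(fX,Z)-(\nabla^g_XF)(Y,Z)$. Producing the stated five terms with coefficient $\tfrac14$ then requires antisymmetrizing in $X,Y$ using $T(X,Y,Z)=-T(Y,X,Z)$. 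That is, I write $T(X,Y,Z)=\tfrac12(T(X,Y,Z)-T(Y,X,Z))$ and substitute \eqref{Eq-2.13-sp} in both terms. Applying \eqref{acm2} to $(\nabla^g_{fX}F)(fY,Z)$ and $(\nabla^g_{fZ}F)(fY,X)$ and using the skew-symmetry of $F$ in the last two slots should give exactly the listed combination. The term $2(\nabla^g_{fZ}F)(fX,Y)$ arises because the pieces $(\nabla^g_{fZ}F)(fX,Y)$ and $-(\nabla^g_{fZ}F)(fY,X)=(\nabla^g_{fZ}F)(fX,Y)$ merge; the second equality is the $(fV,W)\leftrightarrow(V,fW)$ rule of \eqref{acm2} combined with skew-symmetry.

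For the general connection, the first term $\tfrac12T(X,Y,Z)$ is handled exactly as above, which accounts for the first five terms. The remaining terms are $\tfrac12\{-T(Z,X,fY)+T(Y,Z,fX)\}$. I will evaluate them using \eqref{Eq-2.13b} with the last argument replaced by $fY$ (respectively $fX$), together with the relations \eqref{acmM2}--\eqref{acmM4} from the proof of Theorem~\ref{T-s-w.c.m}. In particular \eqref{acmM2} expresses $T(Y,fZ,fX)+T(fY,Z,fX)$ directly as $2(\nabla^g_XF)(Y,Z)$, and its variants under $f$-substitution give the mixed terms $(\nabla^g_XF)(fY,Z)$, $(\nabla^g_{fY}F)(X,Z)$, $(\nabla^g_YF)(fX,Z)$ and $(\nabla^g_{fX}F)(Y,Z)$ that appear in \eqref{genconein-acm}.

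The main obstacle is this last bookkeeping step: showing that the combination $-T(Z,X,fY)+T(Y,Z,fX)$, which has no symmetry a priori in the nonspecial case, collapses to exactly those four terms with coefficient $-\tfrac14$ each. I would first try to decompose it as a sum of two expressions of the \eqref{acmM2} type, e.g.\ pairing $T(Y,Z,fX)$ with $T(fY,fZ,fX)$ via \eqref{acmM4}. If a residual pure-$T$ term remains, I would remove it with \eqref{acmM3}, and only then expand with \eqref{Eq-2.13b}, checking signs against the almost Hermitian formula \eqref{Eq-2.13}.
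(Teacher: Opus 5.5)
There is a genuine gap. Your scheme is the one the paper indicates: insert the torsion from Theorem~\ref{T-s-w.c.m} into \eqref{genconein}, kill the symmetric part with \eqref{E-cond-2K}, and normalize with \eqref{acm2}. But the step that is supposed to produce the first five terms fails, for two reasons.

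First, in the general case you cannot handle $\tfrac12T(X,Y,Z)$ ``exactly as above''. The formula \eqref{Eq-2.13-sp} is derived using \eqref{E-cond-2K}, so it holds only for special connections.

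Second, even in the special case the coefficients do not close. Insert \eqref{Eq-2.13-sp} as printed into $T(X,Y,Z)=\tfrac12\{T(X,Y,Z)-T(Y,X,Z)\}$ and use $(\nabla^g_{fZ}F)(fY,X)=-(\nabla^g_{fZ}F)(fX,Y)$. This gives $4\,T(X,Y,Z)=B$, where $B$ is the five-term bracket. Hence you get $\tfrac18B$, not $\tfrac14B$, in $g(\nabla_XY,Z)$, so your assertion that the antisymmetrization ``should give exactly the listed combination'' is false as it stands. The discrepancy comes from a spurious factor $2$ in \eqref{Eq-2.13-sp}. The last display of its proof, $2(\nabla^g_XF)(Y,Z)-2(\nabla^g_{fZ}F)(fX,Y)-2(\nabla^g_{fY}F)(fX,Z)=-2\,T(X,Y,Z)$, determines $T$, not $2T$. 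A check: if $\nabla^gF$ restricted to $\ker\eta$ is totally skew of type $(3,0)+(0,3)$ with respect to $f|_{\ker\eta}$, then \eqref{2.7^*} forces $T(X,Y,Z)=-(\nabla^g_XF)(Y,Z)$ on $\ker\eta$, and this connection is special. Yet \eqref{Eq-2.13-sp} would give half of that.

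The repair is to use \eqref{Eq-2.13b}, which is valid for every Einstein connection, for both parts. Relabel $(Y,Z,X)\mapsto(X,Y,Z)$ and use only the skew-symmetry of $(\nabla^g_UF)(V,W)$ in $V,W$. Then it reads $2T(X,Y,Z)=2(\nabla^g_{fZ}F)(fX,Y)-(\nabla^g_{fX}F)(fY,Z)+(\nabla^g_{fY}F)(fX,Z)-(\nabla^g_XF)(Y,Z)+(\nabla^g_YF)(X,Z)$. This is exactly the first five terms with coefficient $\tfrac14$, and no antisymmetrization is needed.

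For the symmetric part, which your proposal leaves as a strategy, no detour through \eqref{acmM2}--\eqref{acmM4} is required. Apply \eqref{Eq-2.13b} with last argument $fX$, respectively $fY$; this is legitimate since $f(\ker\eta)\subset\ker\eta$ and $f^2=-I$ there. Then simplify with \eqref{acm2}. The terms $(\nabla^g_{fZ}F)(X,Y)$ and $(\nabla^g_ZF)(X,fY)$ cancel, and one gets $2\{T(Y,Z,fX)-T(Z,X,fY)\}=-(\nabla^g_XF)(fY,Z)-(\nabla^g_{fY}F)(X,Z)-(\nabla^g_YF)(fX,Z)-(\nabla^g_{fX}F)(Y,Z)$, which gives the last four terms. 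The special formula then follows by simply discarding this symmetric part via \eqref{E-cond-2K}, as you correctly noted; \eqref{Eq-2.13-sp} is never needed.
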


One can find using \eqref{acm2} that the symmetric part of Einstein connection \eqref{genconein-acm} coincides with the Levi-Civita connection if and only if the following equality is true:
\begin{equation}\label{eq:E-spE}
(\nabla^g_XF)(Y,Z)+(\nabla^g_YF)(X,Z)=(\nabla^g_{fX}F)(fY,Z)+(\nabla^g_{fY}F)(fX,Z) .
\end{equation}
As a result, we obtain the following statement.
\begin{corollary}
An Einstein connection \eqref{genconein-acm} on a.c.m. manifold is special if and only if the equality \eqref{eq:E-spE} holds for $X,Y,Z\in\ker\eta$.
\end{corollary}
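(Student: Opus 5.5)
The final corollary asserts that \eqref{genconein-acm} is special, i.e.\ $K_XY=-K_YX$, exactly when \eqref{eq:E-spE} holds. Since the Levi-Civita part $g(\nabla^g_XY,Z)$ is symmetric in $X,Y$, the symmetric part of $\nabla$ equals $\nabla^g$ if and only if the curly bracket in \eqref{genconein-acm}, viewed as a function $B(X,Y,Z)$, is skew-symmetric in $X,Y$. So I will compute $S(X,Y,Z):=B(X,Y,Z)+B(Y,X,Z)$ for $X,Y,Z\in\ker\eta$ and show that it is a nonzero multiple of the difference of the two sides of \eqref{eq:E-spE}. By \eqref{genconein} and \eqref{E-tordfnew2}, this condition is the same as \eqref{E-cond-2K}, the definition of a special Einstein connection.

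First, I sort the ten terms of $B$ into two groups. The first group is the five terms of the special connection displayed after \eqref{genconein-acm}. The second group is the four extra terms
\[
-(\nabla^g_{X}F)(fY,Z)-(\nabla^g_{fY}F)(X,Z)-(\nabla^g_{Y}F)(fX,Z)-(\nabla^g_{fX}F)(Y,Z).
\]
Within the first group, $-(\nabla^g_XF)(Y,Z)+(\nabla^g_YF)(X,Z)$ is manifestly skew in $X,Y$. Likewise $-(\nabla^g_{fX}F)(fY,Z)+(\nabla^g_{fY}F)(fX,Z)$ is skew. The remaining first-group term is $2(\nabla^g_{fZ}F)(fX,Y)$. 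By \eqref{acm2} it equals $2(\nabla^g_{fZ}F)(X,fY)=-2(\nabla^g_{fZ}F)(fY,X)$, so it is also skew. Hence the first group contributes nothing to $S$, which matches the claim that the displayed special connection is indeed special.

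Next, I symmetrize the four extra terms in $X\leftrightarrow Y$. This gives
\[
S=-(\nabla^g_XF)(fY,Z)-(\nabla^g_YF)(fX,Z)-(\nabla^g_{fX}F)(Y,Z)-(\nabla^g_{fY}F)(X,Z),
\]
each pair occurring twice, so $S$ equals twice the sum of these four terms. To bring $S$ to the form of \eqref{eq:E-spE}, I substitute $Y\mapsto fY$ in the identities \eqref{acm2}, using $f^2=-I$ on $\ker\eta$. For $Y,Z\in\ker\eta$ this gives $(\nabla^g_XF)(fY,Z)=(\nabla^g_XF)(Y,fZ)$ and $(\nabla^g_XF)(fY,fZ)=-(\nabla^g_XF)(Y,Z)$. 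Applying these with $Z$ replaced by $fZ$ turns each term of $S$ into a term of \eqref{eq:E-spE} evaluated at $fZ$. The vanishing of $S(X,Y,Z)$ for all $Z\in\ker\eta$ is then equivalent to the vanishing of $S(X,Y,fZ)$, because $f$ is invertible on $\ker\eta$. This should yield exactly \eqref{eq:E-spE}, up to the sign and factor that the authors absorb.

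The main obstacle is bookkeeping with the $f$-placements. One has to check that the pairing produced by the symmetrization really matches $(\nabla^g_XF)(Y,Z)+(\nabla^g_YF)(X,Z)$ against $(\nabla^g_{fX}F)(fY,Z)+(\nabla^g_{fY}F)(fX,Z)$, and not some other combination. I would first try to verify this by moving $f$ off the first argument using \eqref{acm1} and \eqref{E-nabla-F-f}. That step needs care, because $(\nabla^g_{fX}F)$ is not directly related to $(\nabla^g_XF)$ by \eqref{acm2}; if the direct match fails, I would instead adopt \eqref{eq:E-spE} in the form that the symmetrization produces.
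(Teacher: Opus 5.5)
Your approach is correct and is the same as the paper's: symmetrize the bracket of \eqref{genconein-acm} in $X,Y$, use \eqref{acm2} to see that the five special-connection terms are skew while the four extra terms are already symmetric, and then apply \eqref{acm2} after $Z\mapsto fZ$, which is legitimate since $f$ is invertible on $\ker\eta$. The step you left hedged does go through term by term, because \eqref{acm2} holds for an arbitrary derivative direction: with $S$ your four-term sum, $S(X,Y,fZ)=(\nabla^g_XF)(Y,Z)+(\nabla^g_YF)(X,Z)-(\nabla^g_{fX}F)(fY,Z)-(\nabla^g_{fY}F)(fX,Z)$, so you never need to relate $\nabla^g_{fX}F$ to $\nabla^g_XF$, and the fallback of ``adopting'' another form of \eqref{eq:E-spE} is unnecessary (it would not prove the stated corollary anyway).
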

}

\begin{example}\label{ex-3.8}\rm
Take a $2n$-dimensional almost Hermitian manifold $(M',J,g')$, where $J^2=-{I}$ and a real line $(\mathbb{R}, dt^2)$.
Consider the Riemannian product
$(M,g)=\bigl(M'\times\mathbb{R},\; g'\oplus dt^2\bigr)$. Then $(M^{2n+1},f,Q,\xi,\eta,g)$ is a weak a.c.m. manifold, with $\xi=\partial_t$, $\eta=dt$,
$f=\sqrt{\lambda}\,J$,
and
$Q=\lambda^2 I + (1-\lambda^2)\,\eta\otimes\xi$,
where $\lambda\in\mathbb{R}_+$. 
On $TM'$ we~get
\begin{align*}
\widetilde Q
=(\lambda-1)\,{I}, \quad
P={I}-f^2=(1+\lambda)\,{I},
\quad
P^{-1}=\frac{1}{1+\lambda}\,{I}.
\end{align*} 
Since the Levi-Civita connection of the Riemannian product preserves the splitting $TM= TM'\oplus \mathbb{R}$, $f^2$ acts as a multiple of the identity on the first factor. The 2-form $F$ corresponding to $f$ is given on $TM'$ by 
$F(X,Y)=g(X,\sqrt\lambda J Y)$. 
As a consequence,
 $(\nabla^g_XF)(Y,Z)=0$,
whenever $X$ belongs to one factor and at least one of $Y,Z$ belongs to the second factor.

If an Einstein connection $\nabla$ satisfies the 
Q-T-condition \eqref{E-Q-T-condition}, then  
 $T(X,Y)=0$ for $X\in TM',\, Y\perp TM'$.
Hence, the torsion tensor splits into the sum
 $T=T|_{TM'} \oplus T|_{T\mathbb{R}}$.
Substituting $\widetilde Q|_{\,TM'} = (\lambda-1)\,{I}$ into \eqref{Eq-QT-solution}, then using
Theorem~\ref{T-w.c.m} and the equalities
\[
 (\nabla^{g}_X F)(J Y, J Z)= -\,(\nabla^{g}_X F)(Y,Z),
 \quad
 (\nabla^{g}_X F)(J Y, Z)= (\nabla^{g}_X F)(Y, J Z),
\] 
we uniquely determine the torsion component $T\,|_{TM'}$
by
\begin{align}\label{E-sol-lambda}
\notag
 \tfrac12(\lambda &+1)^2\lambda\,
 T\,|_{\,TM'}(Y,Z,X) = -(\nabla^{g}_Y F)(Z,X) 
 -(\nabla^{g}_Z F)(X,Y) \\
\notag 
& +\lambda\big\{2\,(\nabla^{g}_{JX} F)(Y,JZ)
-(\nabla^{g}_{JY} F)(JZ,X)
-(\nabla^{g}_{JZ} F)(X,JY) \big\} \\
& +(\lambda-1)\big\{ 2\,(\nabla^{g}_X F)(Y,Z) 
 -(\lambda+\tfrac12)\,dF(X,JY,JZ)
 -\tfrac12\,(\lambda+2)\,dF(X,Y,Z) \big\}.
\end{align}
For \eqref{E-xi-YX} and \eqref{E-x-y-xi} we use $(I+\sqrt{\lambda}\,J)^{-1}=
\frac1{1+\lambda}(I-\sqrt{\lambda}\,J)$.
If $(M',J,g')$ is a K\"ahler manifold, then $\nabla^{g}F=0$ and $dF=0$. In this case, all torsion components vanish, and the Einstein connection 
on $(M,g)$ reduces to the Levi-Civita connection.
The~formula \eqref{E-sol-lambda} for $\lambda=1$ gives the solution \eqref{Eq-2.13b}, which applies to a.c.m. manifolds. 
\end{example}

\begin{remark}\rm
(i)~D.~Chinea and C.~Gonzalez \cite{CG-1990}
obtained a classification of a.c.m. manifolds 
based on the behaviour of the covariant derivative
$\nabla^{g} F$ and similar to the classification in \cite{gh-1980} of almost Hermitian manifolds, 
and using Theorem~\ref{T-w.c.m}, we can represent explicitly the torsion of an Einstein connection for them.

(ii)~In \cite{rst-68}, we presented an Einstein connection (and a special Einstein connection) for a subclass of a.c.m. manifolds satisfying the $f^2$-torsion condition.
For an a.c.m. manifold, the $f^2$-torsion condition is compatible with the splitting
$TM=\ker\eta\oplus \{\xi\}$.
Indeed, since $f^2=-I+\eta\otimes\xi$,
we have $f^2X=-X$ 
for any $X\in\ker\eta$, and the $f^2$-torsion condition
\[
 T(f^2X,Y)=T(X,f^2Y)=f^2T(X,Y)
\]
implies
$T(X,Y)=-f^2T(X,Y)$.
Hence, the torsion $T(X,Y)$ has no $\xi$-component, i.e.,
\[
T(X,Y)\in\ker\eta 
\ \Longleftrightarrow \ T(X,Y,\xi)=0
\qquad (X,Y\in\ker\eta) .
\]
Therefore, the subclass of a.c.m. manifolds satisfying the $f^2$-torsion condition is charac\-terized by the fact that the torsion is adapted to the decomposition
 $TM=\ker\eta\oplus \{\xi\}$,
in particular, the torsion of any two horizontal vectors is again horizontal.

(iii)
For $Q=I$, the equation \eqref{Eq-QT-solution2} reduces to \eqref{Eq-2.13b}.
Indeed, for an a.c.m. manifold and for horizontal vector fields
\(X,Y,Z\in \ker\eta\), we have
\[
Q=I,\qquad \widetilde Q=Q-I=0,\qquad f^2=-I,\qquad f^4=I,
\qquad P=I-f^2=2I.
\]
Hence in \eqref{Eq-QT-solution2} all \(\widetilde Q\)-terms vanish, while
 $(I+\tfrac12\widetilde Q)^2f^4X=X$.
Therefore, \eqref{Eq-QT-solution2} reduces to
\begin{align*}
\notag
 2\,T(Y,Z,X) =&\ 
 (\nabla^g_X F)(fY,fZ) -(\nabla^g_Y F)(Z,X)
 -(\nabla^g_Z F)(X,Y) +(\nabla^g_{fX}F)(fY,Z) \\
&+(\nabla^g_{fX}F)(Y,fZ) -(\nabla^g_{fY}F)(fZ,X) 
 -(\nabla^g_{fZ}F)(X,fY) .
\end{align*}
Using the equalities on \(\ker\eta\):
$(\nabla^g_XF)(fY,fZ)=-(\nabla^g_XF)(Y,Z)$
and $(\nabla^g_{fX}F)(Y,fZ)=(\nabla^g_{fX}F)(fY,Z)$,
we obtain \eqref{Eq-2.13b}.
The same equation \cite[Eq.~3.8]{rst-68} was obtained under assumption of the $f^2$-torsion condition.
Hence, Theorem~\ref{T-s-w.c.m} generalizes 
\cite[Theorem~3.3]{rst-68}.
\end{remark}

\begin{remark}\rm
The following question arise:
How are the special Einstein connections presented in \cite[Theorem~3.7]{rst-68} and Theorem~\ref{T-s-w.c.m} related? 
In \cite{rst-68},  the $f^2$-torsion condition is used, which is adapted to the decomposition
$TM = \ker\eta \oplus \mathrm{span}\{\xi\}$.
This condition implies that the torsion is horizontal on $\ker\eta$, i.e.,
$T(X,Y,\xi)=0 \ (X,Y \in\ker\eta)$.
This allows us to derive explicit formulas for the torsion of special Einstein connections.

In the present paper, we replace the $f^2$-torsion condition by the Q-T-condition, which is naturally adapted to weak a.c.m. structures. Although this condition does not imply $T(X,Y,\xi)=0$, 
it yields invariance of $dF$ and $\nabla^g F$ under the action of $Q$ (see Lemma~\ref{L-QT-df-nablaF}), 
which allows one to determine the torsion and obtain the general formula (Theorem~\ref{T-w.c.m})..

In the classical a.c.m. case we have $Q=I$, and the Q-T-condition becomes trivial. Then the general formula \eqref{Eq-QT-solution2} reduces to \eqref{Eq-2.13b}. Moreover, for special Einstein connections, the condition
\(
K_{X}Y = -K_{Y}X
\)
yields the torsion formula \eqref{Eq-2.13-sp}, which coincides with the expression obtained in \cite[Theorem 3.7]{rst-68}.
Therefore, the special Einstein connections from \cite{rst-68} appear as a particular case of those given in Theorem~\ref{T-s-w.c.m}, corresponding to $Q=I$. Thus, Theorem~\ref{T-s-w.c.m} extends the result from the $f^2$-torsion condition  to the more general~case.
\end{remark}

\begin{example}\rm
 Let $(M^{2n+1},f,\xi,\eta,g)$  be a Sasakian manifold. For this manifold it holds
\begin{align}\label{E-sasaki}
 (\nabla^g_X{f})Y=g(X,Y)\,\xi -\eta(Y)X , 
\end{align} 
e.g., \cite{Blair-survey}.
Also note that ``weak Sasakian manifolds" are Sasakian manifolds, see \cite{rov-survey24}.
Since $F=d\eta$ for a contact metric manifold, we have $dF=0$.
Setting $Y=\xi$ in \eqref{E-sasaki} gives
$\nabla^g_X\,\xi=-fX$. Hence, $\nabla^g_\xi\,f=0$, $\nabla^g_\xi\,\xi=0$.
Using \eqref{E-sasaki} and \eqref{E-nabla-F-f}, gives
\begin{equation}\label{Eq-4.01}
 (\nabla^{g}_X F)(Y,\xi)
  = \eta(X)\eta(Y) - g(X,Y)= -g(fX,fY). 
\end{equation}
Substituting \eqref{Eq-4.01} in \eqref{E-xi-YX}
and \eqref{E-x-y-xi} and using 
$f(I+f)^{-1}=\frac12\,(I-f)f$ and $f\xi=0$, gives
\begin{align*}
\notag
 T(\xi,Y,X) &= 2\,g(fX, f(I+f)^{-1}Y) 
= g(fX,(f-f^2)Y) \\
&= g(fX,Y+fY) = F((I+f)Y,X) ,\\
 \notag
 T(X,Y,\xi) & 
 =-2\,F( ( I+f(I+f)^{-1} )X, ( I+f(I+f)^{-1} )Y) \\ 
 & =- 2\,F( ( I+\tfrac12 (I-f)f )X, 
 ( I+\tfrac12 (I-f)f )Y ) =- 2\,F(X,Y).
\end{align*}
Taking $X=\xi$ or $Y=\xi$, the above equations verify \eqref{E-w.c.m-xi}. 
For a Sasakian manifold we have
\[
(\nabla^g_XF)(Y,Z)=g\bigl(Y,(\nabla^g_Xf)Z\bigr)
=g(X,Z)\eta(Y)-\eta(Z)g(X,Y).
\]
Thus
\(
(\nabla^g_XF)(Y,Z)=0\ (X,Y,Z\in\ker\eta)
\).
Using this in \eqref{Eq-2.13b}, gives
 $2\,T(Y,Z,X)=0$ for $X,Y,Z\in\ker\eta$.

Let's test this in the Einstein metricity condition \eqref{metein0}. Since on the distribution 
$\ker\eta$ we also have
$(\nabla^g_X g)(Y,Z)=0$
and
$(\nabla^g_XF)(Y,Z)=0$,
it follows that
$(\nabla^g_X G)(Y,Z)=0\ (X,Y,Z\in\ker\eta)$.
On the other hand, since $T(X,Y,Z)=0$ for $X,Y,Z\in\ker\eta$, the RHS of
\eqref{metein0} also vanishes. Hence \eqref{metein0} is satisfied 
on $\ker\eta$.
%
Take $X,Y,Z\in\ker\eta$, then
$\eta(X)=\eta(Y)=\eta(Z)=0$
and
$f^2|_{\ker\eta}=-I|_{\ker\eta}$.
Hence, on $\ker\eta$ the structure reduces to an almost Hermitian one.
Using \eqref{acm2}, the formula \eqref{Eq-2.13b} coincides with the almost 
Hermitian torsion formula \eqref{Eq-2.13}. 
Thus, substituting \eqref{Eq-2.13b} into the Einstein metricity condition \eqref{metein0}, 
we get an identity.
Thus, for $X,Y,Z\in\ker\eta$, the torsion tensor $T(X,Y,Z)$ satisfies~\eqref{metein0}. 
\end{example}

\section*{Conclusion}

We explicitly presented the Einstein connection for a nonsymmetric pseudo-Rieman\-nian manifold, modeled by a weak a.c.m. structure, satisfying the 
Q-T-condition.
We~expressed the torsion in terms of $\nabla^g F$ and $dF$, 
and 
explained 
how the weak a.c.m. case differs from the a.c.m. case.
The~presented identities 
with the torsion of an Einstein connection 
provide a new tool for constructing examples and for studying classes of nonsymmetric pseudo-Rieman\-nian manifolds, in particular, with 
Gray-Hervella and Chinea-Gonzalez classifications, in Einstein's nonsymmetric gravitational~theory.


\bigskip

\noindent{\bf Acknowledgments.} This work was 
supported for Prof. Milan Zlatanovi\' c
by the Ministry of Education, Science and Technological Development of the Republic of Serbia (contract reg. no. 451-03-34/2026-03/200124).  Miroslav Maksimovi\'c is supported by the Bulgarian Ministry of Education and  Science, 
(contract DO1-67/05.05.2022) and by the Ministry of Education, Science and Technological Development of the Republic of Serbia (contract reg. no. 451-03-34/2026-03/200123).

\end{document}